\theoremstyle{change}
\newtheorem{proclaim}{PROCLAIM}[section]
\newtheorem{prop}[proclaim]{Proposition}
\newtheorem{rem}[proclaim]{Remark}
\newtheorem{dfn}[proclaim]{Definition}
\newtheorem{thm}[proclaim]{Theorem}
\newcommand{\bR}{\mathbb{R}}
\newcommand{\R}{\mathbb{R}}
\newcommand{\bRn}{\mathbb{R}^n}
\newcommand{\bN}{\mathbb{N}}
\newcommand{\N}{\mathbb{N}}
\def\P{{\mathbf P}}
\def\X{{\mathbf X}}
\def\val{\mathop{\rm val}}
\def\ri{\mathop{\rm ri}}
\def\dom{\mathop{\rm dom}}
\def\epi{\mathop{\rm epi}}
\begin{document}

\baselineskip=15pt

\begin{titlepage}
	\vglue 0.5cm
	\begin{center}
		\begin{large}
			{\bf Subgradient evolution of value functions in discrete-time optimal control}
			
			\smallskip
		\end{large}
		\vglue 1.truecm
		\begin{tabular}{clclcl}
			\begin{large} {\sl Julio Deride
			} \end{large}&&
			\begin{large} {\sl Cristopher Hermosilla
			} \end{large}&&
			\begin{large} {\sl Mattia Solla
			} \end{large}&\\
			Department of Mathematics&&			Department of Mathematics&&			Department of Mathematics&\\
			Universidad T.F.Santa Mar\'ia&&			Universidad T.F.Santa Mar\'ia&&			Universidad T.F.Santa Mar\'ia&\\
			julio.deride@usm.cl&&cristopher.hermosill@usm.cl&&mattia.solla@usm.cl& 
		\end{tabular}
	\end{center}
	\vskip 0.5truecm
	\noindent 
		{\bf Abstract}.\quad In this paper we investigate how the subgradients of the value function of a discrete-time convex Bolza problem evolve over time. In particular, we develop a
		discrete-time version of the characteristic method introduced by Rockafellar and Wolenski in the 2000s, by showing that the time-evolution of the subgradients of the value functions can be associated with trajectories of a discrete-time Hamiltonian system. To do so, we first prove that the value function has a dual counterpart, which corresponds to the conjugate of the value function of a suitable dual problem. We finally make a discussion about the qualification conditions we require for our results, showing in particular that classical problems, such as the Liner-Quadratic regulator, satisfy these hypotheses.
	\vskip 1.5truecm
	\halign{&\vtop{\parindent=0pt
			\hangindent2.5em\strut#\strut}\cr
		{\bf Keywords}: \ Optimal control, Discrete-time systems, Convex Bolza \\
		\hglue 1.50cm   problems, Linear-Quadratic regulator, Mixed constraints.\hfill\break\cr
		{\bf AMS Classification}: \quad  93C55, 46N10,
		49N10,49N15. \cr\cr
		{\bf Date}:\quad \ \today \cr}
	\end{titlepage}
	\baselineskip=15pt

\section{Introduction}
This paper is concerned with Bolza problems associated with a discrete-time dynamical system, that is, we aim at studying the optimization problem
\[\text{Minimize }\sum_{t=\tau+1}^TL_t(x_{t-1},\Delta x_t)+g(x_T)\]
over all $\{x_t\}_{t=\tau}^T\subset\bR^n$ such that $x_\tau=\xi$. Here $\tau$ and $\xi$ are the initial data of the problem, which for our purposes are considered as inputs of the problem. The value $T\in\N$ is the final horizon, which is assumed to be fixed all along this manuscript.

Our goal is to study the value function of the problem, that is the value  of the optimization problem, denoted $V(\tau,\xi)$, seen as a function of the initial data $(\tau,\xi)$, and in particular to understand the evolution over time of the subgradients of the function $\xi\mapsto V(\tau,\xi)$.

To undertake this task, we focus on Bolza problems of convex type (see Assumption \eqref{hyp:convex} below). This fact provides two key points for our analysis. The first one, being that the value function turns our to be convex with respect to the state variable, and the second one being that this structural property of the value function allows to study convex Bolza problems from a  duality viewpoint. Indeed, in the 1970s, in a series of papers \cite{Roc70,Roc71,Roc72,Roc73,Roc76}, Rockafellar set the foundations of a duality theory for this type of problems with underlying continuous-time systems. The main advantage when working in this context is that a convex optimal control problem can be paired with a dual problem, which turns out also to be  a convex Bolza problem. A sample result that can be obtained from the theory is that coextremals are minimizers of the dual problem; see for instance \cite[Section 10]{Roc70}. This in particular leads to necessary optimality conditions, which in this case are habitually sufficient too, to be given in terms of a Hamiltonian system and also in the form of a Maximum Principle; see for instance \cite{BecHer22,PenPer14,Roc70,Roc76,Roc89}. It is worth mentioning that considerable attention has been put on studying the value function in the light of this duality theory for problems without pathwise constraints; see for instance \cite{Goe04b,Goe04a,Goe05,Goe05a,HerWol17,HerWol19,Roc04,RocGoe08,RocWol00,RocWol00b}. Some  contributions for the state constrained case have been done by one of the authors (see \cite{HerWol17,HerWol19}), but there are still several questions to be addressed. In particular, to understand how these results transpose to deterministic and/or stochastic discrete-time systems.

It is worth mentioning that  Bolza problem with possibly non-finite Lagrangians allow us to recover classical control problems such as the linear quadratic optimal control problems (see \cite{Roc87,GoeSub07,HerWol17}) or Linear-convex problems with mixed constraints (see \cite[Example 2]{Roc70} or \cite{BecHer22}). In particular,  our approach provides a framework to investigate linear quadratic optimal control problems with state constraints and without {coercivity assumptions in} the control variable;  {this was} pointed out in \cite{HerWol17}. 

In the context of time-continuous systems, a global \emph{characteristic method} {for Convex Bolza problems} was introduced by Rockafellar and Wolenski in \cite[Theorem 2.4]{RocWol00}.   The characteristic method describes the time-evolution of the subgradients of the value functions through following trajectories of a Hamiltonian system. The cited result holds true for Convex Bolza problems under standard hypotheses { where the Lagrangian is coercive and there are} no {state constraints} implicitly encoded (see \cite[Assumption (A) and Section 3]{RocWol00}).  {This means} the minimization in both the primal and dual problems takes places over the space of absolutely continuous arcs.  This results was later extended in \cite{HerWol19} to problem with state constraints and possibly non-coercive Lagrangian, where the minimization in both the primal and dual problems takes places, in that case, over the arcs of bounded variation.

The main contributions of this paper are the following. On the one hand, we transpose the characteristic method to discrete-time systems, by showing that the time-evolution of the subgradients of the value functions can be associated with trajectories of a discrete-time Hamiltonian system. To do so, we first need to prove that the value function turns out to be the conjugate of the value function of the dual problem. One the other hand, these results require some qualification condition on the primal as well on the dual problem. Qualification conditions over the dual problems may be rather cumbersome to verify, thus we also show that, under appropriate assumptions on the data of the primal problem, the qualification conditions over the dual problems are satisfied immediately.

\subsection{Notation and essentials}
Throughout this paper we use the following notation: $|\cdot|$ is the Euclidean norm and $a \cdot b$ stands for the Euclidean inner product of $a,b\in\bR^n$. We set $[\![p:q]\!]:=\{p,p+1,\ldots,q\}$, that is, it is the collection of all integers between $p$ and $q$ (inclusive), assuming always that $p<q$. 

Suppose $\varphi:X\to\bR\cup\{\pm\infty\}$ is a function with $X$ being a topological vector space (t.v.s. for short).  The effective domain of $\varphi$ is the set 
\[\dom(\varphi):=\{x\in X\mid\ \varphi(x)<+\infty\}.\]  
The function $\varphi$ is said to be {\it proper} if $\dom(\varphi)\neq\emptyset$ and $\varphi(x)>-\infty$ for all $x\in X$;  {\it convex} if $\epi(\varphi):=\{(x,r)\in X\times\bR\mid\ \varphi(x)\leq r\}$ is a convex set, and  {\it lower semicontinuous (l.s.c. for short)} if $\epi(\varphi)$ is a closed set.  

We also use standard nomenclature of Convex Analysis: $\ri(S)$ stands for the \emph{relative interior} of a convex set $S\subset\bR^n$. The indicator and support functions of a set $S\subset\bR^n$ are denoted by $\delta_S$ and $\sigma_S$, respectively. The set $S_\infty$ stands for the recession cone of a convex set $S\subset\bR^n$.
The (convex) normal cone to $S$ at $x\in S$ is the set
\[\mathcal{N}_S(x):=\{z\in\bR^n\mid\ z\cdot(s-x)\leq 0,\ \forall s\in S\}.\]

Suppose $Y$ is another t.v.s. which is in duality with $X$ via a bilinear mapping $\langle\cdot,\cdot\rangle:X\times Y\to\bR$. When $X=\bR^n$, we assume it is in duality with itself via the Euclidean inner product. The conjugate of $\varphi:X\to\bR\cup\{+\infty\}$ is the mapping $\varphi^*:Y\to\bR\cup\{\pm\infty\}$ defined via the formula
\[\varphi^*(y):=\sup\left\{\langle x,y\rangle-\varphi(x)\mid\ x\in X\right\},\qquad\forall y\in Y,\]
and its subdifferential  at $x\in\dom(\varphi)$ is the set
\[\partial \varphi(x):=\{y\in Y\mid \varphi(x)+\langle y,z-x\rangle \leq \varphi(z),\ \forall z\in X\}.\]
These mathematical objects are related via the Fenchel-Young equality: 
\begin{equation}\label{eq:FYeq}
	y\in\partial \varphi(x)\quad\Longleftrightarrow\quad \varphi(x)+\varphi^*(y)=\langle x,y\rangle.
\end{equation} 

A function $h:\bR^n\times\bR^n\to\bR\cup\{\pm\infty\}$ is called \emph{concave-convex} if 
$h_y(\cdot)=-h(\cdot,y)$ and $h_x(\cdot)=h(x,\cdot)$ are convex functions.
The (concave-convex) subdifferential of $h$ is the set
\begin{equation}\label{eq:subdif_saddle}
	\partial h(x,y):=[-\partial h_y(x)]\times\partial h_x(y),\qquad\forall (x,y)\in\bR^n\times\bR^n.	
\end{equation}

\section{Fully Convex Bolza problems}\label{sec:duality}
The focus of this paper will be on discrete-time dynamical systems that evolve until a given final time horizon $T>0$, which in our setting is a nonnegative integer that remains fixed all along the manuscript.

The optimization model we analyze corresponds to a Bolza problem associated with a running cost $L:[\![1:T]\!]\times\bR^n\times\bR^n\to\bR\cup\{+\infty\}$ (the Lagrangian of the problem) and a given terminal cost $g:\bR^n\to\bR\cup\{+\infty\}$. In particular, we are concerned with the value function of this discrete-time Bolza problem, which is the mapping that to any given initial position  $(\tau,\xi)\in[\![0:T-1]\times\bR^n$ assigns the optimal payoff of the problem
\begin{equation}\tag{P}\label{eq:primal}
	\vartheta_\tau(\xi):=\inf\left\{\sum_{t=\tau+1}^TL_t(x_{t-1},\Delta x_t)+g(x_T)\mid \{x_t\}_{t=\tau}^T\subset\bR^n,\ x_\tau=\xi\right\},
\end{equation}
where $\Delta x_t=x_t-x_{t-1}$ and $L_t=L(t,\cdot)$ for any $t\in [\![1:T]$. Our task in this paper is to explore duality relations for this value function, by posing the discussion in a fully convex context. The latter means that we are mainly concerned with the case when the following basic assumption holds:
\begin{align}\tag{A}\label{hyp:convex}
	\begin{cases} L_t \text{ is proper, convex and l.s.c. for any }t\in [\![1:T];\\
		g\text{ is proper, convex and l.s.c.}	
	\end{cases}
\end{align}
Unless otherwise stated, we will always assume that \eqref{hyp:convex} is in force.

\subsection{Dual Bolza problem and weak duality}

The convex setting we are studying in this paper prompts for a duality theory which can be obtained via the standard approach with perturbation functions (cf. \cite{RocWet83}). Inspired by study done in \cite{RocWol00} for time-continuous Bolza problems, we consider a dual problem to \eqref{eq:primal}, whose value function (the \emph{dual value function} in the sequel), is given, for $\tau\in [\![0:T-1]$ and $\eta\in\bR^n$, by the expression
\begin{equation}\tag{D}\label{eq:dual}
	\omega_\tau(\eta):=\inf\left\{\sum_{t=\tau+1}^TK_t(p_t,\Delta p_t)+f(p_T)\mid \{p_t\}_{t=\tau}^T\subset\bR^n,\ p_\tau=-\eta\right\}.
\end{equation}
Here $\Delta p_t=p_t-p_{t-1}$, $f:\bR^n\to\bR\cup\{+\infty\}$ is the \emph{dual terminal cost} given by \[f(b):=g^*(-b),\qquad\forall b\in\bR^n,\] 
and $K_t=K(t,\cdot)$, where $K:[\![1:T]\!]\times\bR^n\times\bR^n\to\bR\cup\{+\infty\}$ is the \emph{dual Lagrangian} which is given by
\[K(t,p,w):=\sup_{x,v\in\bR^n}\{x\cdot w+ v\cdot p-L_t(x,v)\},\qquad\forall p,w\in\bR^n.\]
Accordingly, in the sequel, we will refer to the mapping $(\tau,\xi)\mapsto \vartheta_\tau(\xi)$ given by \eqref{eq:primal} as the \emph{primal value function}.
\begin{rem}\label{rem:symmetry}
	Although problem \eqref{eq:dual} has not been written exactly as the primal problem \eqref{eq:primal}, it is not difficult to see that it can be brought to that form if we consider the Lagrangian $\tilde L_t(p,w)=K_t(p+w,w)$. In this case, it is also evident that, since $\tilde L_t^*(v,x)=K_t^*(v,x-v)=L_t(x-v,v)$, the dual problem  to \eqref{eq:dual} (defined in the way proposed above) is exactly \eqref{eq:primal}.
\end{rem}

Notice that, as for the time continuous case, we can extend the definition of both value functions up to time $\tau=T$ as follows:
\[
\vartheta_T(\xi)=g(\xi)\quad\text{and}\quad \omega_T(\eta)=f(-\eta),\qquad\forall \xi,\eta\in\bR^n.
\]
From the definition of the conjugate, it follows that
\[\vartheta_T(\xi)+\omega_T(\eta)\geq \xi\cdot\eta,\qquad\forall \xi, \eta\in\bR^n.
\]
Notice that in particular, we have that $\omega_T\geq\vartheta_T^*$ and $\vartheta_T\geq\omega_T^*$. This weak duality relation, becomes strong (with equality) whenever the transversality condition $-\eta\in \partial g(\xi)$ holds; this is a straightforward consequence of \eqref{eq:FYeq}.

The next proposition shows that this weak duality relation propagates backward in time for any $\tau\in[\![0:T]\!]$. Latter on, we will show that the weak duality stated below becomes strong under certain qualification conditions.
\begin{prop}\label{prop:weak}
	For any $\tau\in[\![0:T]\!]$ we have that 
	\[\vartheta_\tau(\xi)+\omega_\tau(\eta)\geq \xi\cdot\eta,\qquad\forall \xi, \eta\in\bR^n.\]
	Moreover, we also have that $\omega_\tau\geq\vartheta_\tau^*$ and $\vartheta_\tau\geq\omega_\tau^*$.
\end{prop}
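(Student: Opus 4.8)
The plan is to prove the inequality directly from the definitions, exploiting a telescoping identity (a backward induction on $\tau$ using the Bellman relations for \eqref{eq:primal} and \eqref{eq:dual} would work equally well, but the direct argument is shorter). If either $\vartheta_\tau(\xi)$ or $\omega_\tau(\eta)$ equals $+\infty$ the claimed inequality is vacuous, so the content lies entirely in comparing feasible trajectories; note that the constant arcs $x_t\equiv\xi$ and $p_t\equiv-\eta$ are always admissible, so both infima are taken over nonempty sets.

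Fix admissible arcs $\{x_t\}_{t=\tau}^T$ with $x_\tau=\xi$ and $\{p_t\}_{t=\tau}^T$ with $p_\tau=-\eta$. From the very definition of the dual Lagrangian, for each $t\in[\![\tau+1:T]\!]$ one has
\[K_t(p_t,\Delta p_t)\ \geq\ x_{t-1}\cdot\Delta p_t+\Delta x_t\cdot p_t-L_t(x_{t-1},\Delta x_t),\]
that is, $L_t(x_{t-1},\Delta x_t)+K_t(p_t,\Delta p_t)\geq x_{t-1}\cdot\Delta p_t+\Delta x_t\cdot p_t$. The key observation is that the right-hand side telescopes: expanding $\Delta p_t=p_t-p_{t-1}$ and $\Delta x_t=x_t-x_{t-1}$ gives $x_{t-1}\cdot\Delta p_t+\Delta x_t\cdot p_t=x_t\cdot p_t-x_{t-1}\cdot p_{t-1}$, so summing over $t$ from $\tau+1$ to $T$ produces $x_T\cdot p_T-x_\tau\cdot p_\tau=x_T\cdot p_T+\xi\cdot\eta$. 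On the other hand, since $f(p_T)=g^*(-p_T)$, the definition of the conjugate yields $g(x_T)+f(p_T)\geq -x_T\cdot p_T$.

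Adding these two estimates, the terms $\pm\,x_T\cdot p_T$ cancel and we obtain
\[\sum_{t=\tau+1}^TL_t(x_{t-1},\Delta x_t)+g(x_T)\ +\ \sum_{t=\tau+1}^TK_t(p_t,\Delta p_t)+f(p_T)\ \geq\ \xi\cdot\eta.\]
Taking the infimum over all admissible $\{x_t\}$ and then over all admissible $\{p_t\}$ gives $\vartheta_\tau(\xi)+\omega_\tau(\eta)\geq\xi\cdot\eta$. For the last assertion, rearranging this inequality as $\omega_\tau(\eta)\geq\xi\cdot\eta-\vartheta_\tau(\xi)$ and taking the supremum over $\xi\in\bR^n$ gives $\omega_\tau\geq\vartheta_\tau^*$; the same manipulation in the other variable gives $\vartheta_\tau\geq\omega_\tau^*$.

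There is no genuine obstacle; the most delicate point is merely the bookkeeping with extended-real values. Since $L_t$, $K_t$, $g$ and $f$ take values in $\bR\cup\{+\infty\}$ (in particular $K_t>-\infty$ everywhere, because $\dom(L_t)\neq\emptyset$), no $\infty-\infty$ ambiguity arises, every displayed inequality holds in $\bR\cup\{+\infty\}$, and the passage to infima and suprema is legitimate. This argument is exactly the discrete counterpart of the integration-by-parts/telescoping step that underlies weak duality for continuous-time Bolza problems.
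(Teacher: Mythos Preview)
Your proof is correct and follows essentially the same route as the paper: use the Fenchel inequality for $(L_t,K_t)$ to get a term that telescopes to $x_T\cdot p_T+\xi\cdot\eta$, combine with $g(x_T)+f(p_T)\geq -x_T\cdot p_T$, and pass to infima. Your explicit derivation of the ``moreover'' part and your remarks on extended-real bookkeeping are slightly more detailed than the paper's version, but the argument is the same.
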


\begin{proof}
	We note that it is enough to check the inequality for $\xi,\eta\in\bR^n$ such that $\vartheta_\tau(\xi), \omega_\tau(\eta)<+\infty$, otherwise the conclusion is straightforward (considering the convention $+\infty\pm\infty=+\infty$). Note that we are not assuming a priori that $\vartheta_\tau(\xi),\omega_\tau(\eta)>-\infty$. 
	
	Let $\{x_t\}_{t=\tau}^T\subset\bR^n$ and $\{p_t\}_{t=\tau}^T\subset\bR^n$ be  feasible processes for the optimization problems defined in \eqref{eq:primal} and \eqref{eq:dual}, respectively. Notice that by the definition of the conjugate we have
	$$L_t(x_{t-1},\Delta x_t)+K_t(p_t,\Delta p_t)
	\geq x_{t-1}\cdot\Delta p_t+\Delta x_t\cdot p_t=x_t\cdot p_t-x_{t-1}\cdot p_{t-1}.$$
	Thus,
	$$\sum_{t=\tau+1}^TL_t(x_{t-1},\Delta x_t)+\sum_{t=\tau+1}^TK_t(p_t,\Delta p_t)\geq x_T\cdot p_T-x_\tau\cdot p_\tau=x_T\cdot p_T+\xi\cdot \eta.$$
	Since $f(b)=g^*(-b)$, the definition of the conjugate of $g$ leads to
	\[g(x_T)+f(p_T)\geq-x_T\cdot p_T.\]
	It follows that
	\[\sum_{t=\tau+1}^TL_t(x_{t-1},\Delta x_t)+g(x_T)+ \sum_{t=\tau+1}^TK_t(p_t,\Delta p_t)+f(p_T)\geq\xi\cdot\eta. \]
	Taking infimum over  $\{x_t\}_{t=\tau}^T\subset\bR^n$ and $\{p_t\}_{t=\tau}^T\subset\bR^n$ we get the desired inequality. In particular, we must have that $\vartheta_\tau(\xi)>-\infty$ and $\omega_\tau(\eta)>-\infty$, and thus $\eta\in\dom(\omega_\tau)$ and $\xi\in\dom(\vartheta_\tau)$.
\end{proof}

\subsection{Strong duality}
Let us now focus on proving a strong duality result, which in our case means that the primal and dual value functions are conjugate to each other. For such purpose we introduce the following qualification condition:
\begin{align}\tag{H}\label{hyp:qual_prim}
	\begin{cases}
		\exists \{\bar x_t\}_{t=0}^T\subset\bR^n,\quad\text{such that }\bar x_T\in\ri\left(\dom(g)\right),\\
		\bar x_{t-1}\in\ri\left(\X(t)\right)\quad\text{and}\quad\Delta \bar x_t\in \ri\left(\Gamma_{L}(t,\bar x_{t-1})\right),\quad\forall t\in[\![1:T]\!].
	\end{cases}
\end{align}
where
\[\Gamma_{L}(t,x):=\{v\in\bR^n\mid\ L_t(x,v)\in\bR\},\,\quad\text{and}\quad\,\X(t):=\{x\in\bR^n\mid\ \Gamma_{L}(t,x)\neq\emptyset\}.\]

\begin{rem}
	Notice that the minimization process in  \eqref{eq:primal} can be restrained to processes whose initial position $x_\tau=\xi$ is brought to the target $\dom(g)$ at time $t=T$. This means that the set $\dom(g)$ can in this context be understood as a terminal constraint, implicitly  encoded in the formulation of the problem. 
	
	Similarly, by allowing each $L_t$ to take infinite values, we are handling implicitly constraints over the state of system $x_t$ and the variation $\Delta x_t$. Indeed, any feasible processes of the Bolza problem  \eqref{eq:primal} must satisfy:
	\begin{eqnarray*}
		x_{t-1}\in\X(t)\quad\text{and}\quad\Delta x_t\in \Gamma_{L}(t,x_{t-1}),\quad\forall t\in[\![\tau+1:T]\!].
	\end{eqnarray*}
	In other words, the set-valued maps $\Gamma_{L}$ and $\X$ correspond respectively to the underlying dynamics of the system and  to the (time-dependent) state-constraint.
	
	Taking this into account, it follows that the qualification condition \eqref{hyp:qual_prim} can be understood as a \emph{strict feasibility assumption} over the dynamical system.
\end{rem}

We are now in a position to establish a strong duality relation between the primal and dual value functions.

\begin{prop}\label{prop:conjugacy_1}
	Under Hypothesis \eqref{hyp:qual_prim}, for any $\tau\in[\![0:T]\!]$ it follows that $\vartheta_\tau^*=\omega_\tau$.
	Moreover, the infimum in the definition of $\omega_\tau(\eta)$ is attained for any $\eta\in\dom(\omega_\tau)$.
\end{prop}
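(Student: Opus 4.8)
The plan is to prove the reverse of the inequality $\omega_\tau\ge\vartheta_\tau^{*}$ supplied by Proposition~\ref{prop:weak}, namely $\vartheta_\tau^{*}\ge\omega_\tau$, together with attainment of the infimum defining $\omega_\tau(\eta)$ whenever $\eta\in\dom(\omega_\tau)$. The case $\tau=T$ is immediate: since $g$ is proper, convex and l.s.c.\ we have $\vartheta_T^{*}=g^{*}=\omega_T$, and the infimum in $\omega_T$ runs over an empty family of increments. So fix $\tau\in[\![0:T-1]\!]$ and $\eta\in\bR^n$, and regard the increments $v_t:=\Delta x_t$ as independent variables. Expanding the conjugate and the infimum in \eqref{eq:primal} one gets
\[
\vartheta_\tau^{*}(\eta)=-\inf\Big\{J_0(z)\ \Big|\ z=\big(\{x_t\}_{t=\tau}^{T},\{v_t\}_{t=\tau+1}^{T}\big),\ x_t-x_{t-1}-v_t=0\ \ \forall\, t\in[\![\tau+1:T]\!]\Big\},
\]
with $J_0(z):=\sum_{t=\tau+1}^{T}L_t(x_{t-1},v_t)+g(x_T)-\eta\cdot x_\tau$. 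Since $x_\tau$ appears only in $L_{\tau+1}$ and in the linear term, each $x_j$ with $j\in[\![\tau+1:T-1]\!]$ only in $L_{j+1}$, each $v_t$ only in $L_t$, and $x_T$ only in $g$, the objective $J_0$ splits as a sum over the pairwise disjoint coordinate blocks $(x_{t-1},v_t)$, $t\in[\![\tau+1:T]\!]$, and $\{x_T\}$; hence $J_0$ is proper, convex and l.s.c., and $\dom(J_0)=\dom(L_{\tau+1})\times\cdots\times\dom(L_T)\times\dom(g)$ after the obvious identification of coordinates.

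Next I would dualize this affinely constrained convex program by attaching a multiplier $p_t\in\bR^n$ to each equality constraint. Computing the Lagrangian dual function $\inf_{z}\{J_0(z)+\sum_{t}p_t\cdot(x_t-x_{t-1}-v_t)\}$ block by block and using the definitions of $K_t$ and of $f=g^{*}(-\,\cdot\,)$, one checks that it equals $-\big(\sum_{t=\tau+1}^{T}K_t(p_t,\Delta p_t)+f(p_T)\big)$ once one sets $p_\tau:=-\eta$, the value forced by the block containing $x_\tau$. Therefore the Lagrangian dual of the program above is exactly $-\omega_\tau(\eta)$; weak duality then merely reproduces $\vartheta_\tau^{*}\le\omega_\tau$, and the real content is strong duality together with dual attainment.

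To obtain these I would invoke the conjugate duality theorem for convex problems with affine constraints (cf.\ \cite{RocWet83}): it suffices to produce a feasible $z$ lying in $\ri(\dom(J_0))$ to rule out a duality gap and to get attainment of the dual optimum whenever the common value is finite. Such a point is exactly what \eqref{hyp:qual_prim} provides: the trajectory $z^{\star}:=\big(\{\bar x_t\}_{t=\tau}^{T},\{\Delta\bar x_t\}_{t=\tau+1}^{T}\big)$ is feasible, and since $\dom(L_t)$ is convex with projection $\X(t)$ onto the state coordinate and fibre $\Gamma_{L}(t,x)$ over $x$, the classical description of the relative interior of a convex set through those of its projection and of its fibres gives $\ri(\dom(L_t))=\{(x,v)\mid x\in\ri(\X(t)),\ v\in\ri(\Gamma_{L}(t,x))\}$. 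Thus \eqref{hyp:qual_prim} places each block of $z^{\star}$ in the corresponding relative interior, i.e.\ $z^{\star}\in\ri(\dom(J_0))$. Since the primal value $-\vartheta_\tau^{*}(\eta)$ is at most $J_0(z^{\star})<+\infty$, the theorem yields $\vartheta_\tau^{*}(\eta)=\omega_\tau(\eta)$ for every $\eta$, and, when this value is finite --- which by Proposition~\ref{prop:weak} is precisely the case $\eta\in\dom(\omega_\tau)$ --- it produces an optimal multiplier $\{p^{\star}_t\}_{t=\tau+1}^{T}$; setting $p^{\star}_\tau:=-\eta$ then exhibits a feasible process attaining the infimum in the definition of $\omega_\tau(\eta)$.

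I expect the main obstacle to lie in the book-keeping of the block-by-block dualization --- in particular isolating the convention $p_\tau=-\eta$ and keeping all signs straight so that the dual objective emerges as $\sum_t K_t(p_t,\Delta p_t)+f(p_T)$ --- and in the relative-interior identification for $\dom(L_t)$, which is what makes \eqref{hyp:qual_prim} the correct hypothesis; once the program is set up, the appeal to the duality theorem is routine.
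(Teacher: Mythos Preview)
Your argument is correct and follows essentially the same route as the paper's proof: both cast $-\vartheta_\tau^{*}(\eta)$ as a finite-dimensional convex program, invoke the Rockafellar--Wets conjugate duality machinery \cite{RocWet83}, and use \eqref{hyp:qual_prim} to furnish a relatively interior feasible point that forces strong duality and dual attainment. The only cosmetic difference is packaging: you decouple the increments $v_t=\Delta x_t$ as separate variables, impose the affine constraints $x_t-x_{t-1}-v_t=0$, and compute the Lagrangian dual block by block, whereas the paper keeps the increments implicit and works directly with the perturbation function $\phi(\mathbf{y})$ of \cite{RocWet83}, perturbing each $\Delta x_t$ by $y_t$ and the initial position by $y_\tau$; these are the two standard presentations of the same conjugate-duality scheme and yield the identical dual objective $\sum_t K_t(p_t,\Delta p_t)+f(p_T)$ with $p_\tau=-\eta$.
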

\begin{proof}
	Let $\tau\in[\![0:T]\!]$ be given. Note that from \eqref{hyp:qual_prim}, we get $\vartheta_\tau (\bar x_\tau)<+\infty$. In particular, it follows that
	\[\vartheta_\tau^*(\eta)\geq \bar x_\tau\cdot \eta- \vartheta_\tau (\bar x_\tau)>-\infty,\qquad\forall \eta\in\bR^n.\]
	
	Let us focus on proving first $\omega_\tau=\vartheta_\tau^*$.  Let us define $\mathbf E_\tau=\prod_{t=\tau}^T\bR^n$, and observe that for $\eta\in\dom(\vartheta_\tau^*)$ we have
	$$-\vartheta^*_\tau(\eta)=\inf_{\xi\in\bR^n}\left\{\vartheta_\tau(\xi)-\xi\cdot\eta\right\}=\inf_{\mathbf{x}\in\mathbf E_\tau}\left\{\sum_{t=\tau+1}^TL_t(x_{t-1},\Delta x_t)+g(x_T)-x_\tau\cdot\eta\right\},$$
	where we have considered $\mathbf{x}=\{x_t\}_{t=\tau}^T$. If we define $\ell(a,b)=g(b)-a\cdot\eta$, it is not difficult to see that
	$\ri(\dom(\ell))=\bR^n\times \ri(\dom(g))$, and so, from the arguments used to prove \cite[Theorem 2]{RocWet83}, it follows that \eqref{hyp:qual_prim} implies that $\partial\phi(0)\neq\emptyset$, where for any $\mathbf{y}=\{y_t\}_{t=\tau}^T\in\mathbf E_\tau$ the perturbation function $\phi$ is
	\begin{equation}\label{eq:perturbation}
		\phi(\mathbf{y})=\inf_{\mathbf{x}\in \mathbf E_\tau}\left\{\sum_{t=\tau+1}^TL_t(x_{t-1},\Delta x_t+y_t)+\ell(x_\tau+y_\tau,x_T)\right\}.
	\end{equation}
	From \cite[Theorem 3]{RocWet83} we get that
	$$\vartheta^*_\tau(\eta)=\min_{\mathbf{p}\in\mathbf E_\tau}\left\{\sum_{t=\tau+1}^TK_t(p_t,\Delta p_t)+\ell^*(p_\tau,-p_T)\mid \mathbf{p}=\{p_t\}_{t=\tau}^T\right\},$$
	and the minimum is attained at some $\{p^*_t\}_{t=\tau}^T\in\partial\phi(0)\subset\mathbf E_\tau$.
	Therefore, since $\ell^*(a,b)=g^*(b)$ if $a=-\eta$ and $\ell^*(a,b)=+\infty$ otherwise, we get that $\omega_\tau=\vartheta_\tau^*$; for the case $\vartheta_\tau^*(\eta)=+\infty$ the equality  follows from the weak duality (Proposition \ref{prop:weak}).
\end{proof}

In a symmetric way, if now a qualification condition is imposed over the data of the dual problem, a similar result can be obtained. To be more precise, let us consider the following qualification condition

\begin{align}\tag{H'}\label{hyp:qual_dual}
	\begin{cases}
		\exists \{\bar p_t\}_{t=0}^T\subset\bR^n,\quad\text{such that }\bar p_T\in\ri\left(\dom(f)\right),\\
		\bar p_{t-1}\in\ri\left(\P(t)\right)\quad\text{and}\quad\Delta \bar p_t\in \ri\left(\Gamma_{K}(t,\bar p_{t-1})\right),\quad\forall t\in[\![1:T]\!],
	\end{cases}
\end{align}
where,
\[\Gamma_K(t,p):=\{w\in\bR^n\mid\ K_t(p+w,w)\in\bR\},\,\quad\text{and}\quad \P(t):=\{p\in\bR^n\mid\ \Gamma_K(t,p)\neq\emptyset\}.\] 
\begin{rem}
	Similarly as for the primal problem, the set-valued maps $\Gamma_{K}$ and $\P$ can be interpreted respectively as the  dynamics and as the state-constraint of the \emph{dual system}, which are implicitly encoded in the formulation of the problem. Also, $\dom(f)$ corresponds to the underlying terminal constraint of the problem. Consequently,  any feasible arc of the dual Bolza problem \eqref{eq:dual} must satisfy the following constraints:
	\begin{eqnarray*}
		p_{t-1}\in\P(t)\quad\text{and}\quad\Delta p_t\in \Gamma_K(t,p_{t-1}),\quad\forall t\in[\![\tau+1:T]\!]\quad\text{and}\quad p_T\in\dom(f).
	\end{eqnarray*}

	In this context, the qualification condition \eqref{hyp:qual_dual} can be understood as a \emph{strict feasibility assumption} over the dual dynamical system.	
\end{rem}

By symmetry, it is not difficult to see (in the light of Remark \ref{rem:symmetry}), that the following statement holds true.

\begin{prop}\label{prop:conjugacy_2}
	Under Hypothesis \eqref{hyp:qual_dual}, for any $\tau\in[\![0:T]\!]$ it follows that $\omega_\tau^*=\vartheta_\tau$.
	Moreover, the infimum in the definition of $\vartheta_\tau(\xi)$ is attained for any $\xi\in\dom(\vartheta_\tau)$.
\end{prop}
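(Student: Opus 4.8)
The plan is to deduce Proposition \ref{prop:conjugacy_2} from Proposition \ref{prop:conjugacy_1} by invoking the symmetry between the primal and dual problems described in Remark \ref{rem:symmetry}, rather than reworking the perturbation-function argument from scratch. Concretely, I would set up a \emph{new primal problem} whose data is exactly the dual data of the original: take as Lagrangian $\tilde L_t(p,w):=K_t(p+w,w)$ and as terminal cost $\tilde g:=f$. Remark \ref{rem:symmetry} already records that $\tilde L_t^*(v,x)=L_t(x-v,v)$, so the dual Lagrangian associated with $\tilde L$ is (a reparametrization of) the original $L$, and $\tilde g^*(-b)=f^*(-b)=g^{**}(b)=g(b)$ since $g$ is proper, convex and l.s.c. by \eqref{hyp:convex}. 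Hence the value function of this new primal problem is $\omega_\tau$ (after the change of variables $p_\tau=-\eta$, which matches the sign convention in \eqref{eq:dual}), and its dual value function is $\vartheta_\tau$.

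The main point to check is that hypothesis \eqref{hyp:qual_dual}, as stated for the original dual data, is precisely the translation of hypothesis \eqref{hyp:qual_prim} applied to the new primal data $(\tilde L,\tilde g)$. First I would verify that $\Gamma_{\tilde L}(t,p)=\{w\mid \tilde L_t(p,w)\in\bR\}=\{w\mid K_t(p+w,w)\in\bR\}=\Gamma_K(t,p)$ and consequently $\X_{\tilde L}(t)=\P(t)$; and that $\dom(\tilde g)=\dom(f)$. With these identifications, the curve $\{\bar p_t\}_{t=0}^T$ provided by \eqref{hyp:qual_dual} is exactly a curve witnessing \eqref{hyp:qual_prim} for the data $(\tilde L,\tilde g)$. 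Therefore Proposition \ref{prop:conjugacy_1}, applied verbatim to the new primal problem, yields that its value function (namely $\omega_\tau$) and its dual value function (namely $\vartheta_\tau$) satisfy $\omega_\tau^*=\vartheta_\tau$ for every $\tau\in[\![0:T]\!]$, and that the infimum defining $\vartheta_\tau(\xi)$ is attained whenever $\xi\in\dom(\vartheta_\tau)$. This is exactly the assertion.

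The step I expect to require the most care is bookkeeping the change of variables and sign conventions so that the ``dual of the dual is the primal'' identification is genuinely an equality of optimization problems, not merely an isomorphism up to affine substitution. In particular one must be careful that the substitution $w_t=\Delta p_t$ in the new primal problem (so that $p_{t-1}$ plays the role of the state and $w_t=\Delta p_t$ the role of the increment) is consistent: $\tilde L_t(p_{t-1},\Delta p_t)=K_t(p_{t-1}+\Delta p_t,\Delta p_t)=K_t(p_t,\Delta p_t)$, which indeed reproduces the summand in \eqref{eq:dual}. I would also spell out that the initial condition $x_\tau=\xi$ for the new primal corresponds, under $p_t\leftrightarrow x_t$ and $\eta=-p_\tau$, to the constraint $p_\tau=-\eta$ in \eqref{eq:dual}, and that conjugation in the $\eta$ variable picks up no extra sign thanks to the relation between $\tilde g^*$ and the $\ell^*$ term in the proof of Proposition \ref{prop:conjugacy_1}. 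Once these identifications are made explicit, the conclusion is immediate and no new analytic input (beyond what is already in Proposition \ref{prop:conjugacy_1} and Remark \ref{rem:symmetry}) is needed.
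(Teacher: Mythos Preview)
Your proposal is correct and follows exactly the route the paper takes: the paper does not spell out a separate proof but simply invokes the symmetry of Remark~\ref{rem:symmetry} to reduce Proposition~\ref{prop:conjugacy_2} to Proposition~\ref{prop:conjugacy_1}. Your write-up is in fact a careful unpacking of that one-line appeal to symmetry, including the sign bookkeeping (where the composition with $\zeta\mapsto-\zeta$ on both the primal and dual sides cancels so that $\omega_\tau^*=\vartheta_\tau$ holds without an extra sign).
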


As a consequence, under Hypotheses \eqref{hyp:qual_prim} and \eqref{hyp:qual_dual}, for any $\tau\in[\![0:T]\!]$ we have that $\vartheta_\tau$ and $\omega_\tau$ are convex proper and l.s.c. functions on $\bR^n$. Moreover, they are conjugate to each other, that is,
\[\vartheta_\tau^*=\omega_\tau\quad\text{and}\quad  \vartheta_\tau=\omega_\tau^*.\]

\section{Discrete-time Characteristic method}
We now present and prove a discrete-time characteristic method, whose main goal is to provide a description of the evolution of the subgradients of the primal value function by means of a discrete-time Hamiltonian system.

Let $H:[\![0:T]\!]\times\bR^n\times\bR^n\to\bR\cup\{\pm\infty\}$ be the Hamiltonian associated with the primal problem \eqref{eq:primal}, that is
\[H(t,x,p):=\sup_{v\in\bR^n}\left\{p\cdot v-L_t(x,v)\right\}.\]
For $t\in [\![1:T]\!]$ fixed, this function is concave in $x$ and convex in $p$, thus its subdifferential is given by \eqref{eq:subdif_saddle}. In particular, setting $H_t=H(t,\cdot)$, we have that $(-w,v)\in \partial H_t(\bar x,\bar p)$ if and only if
\[H_t(x,\bar p)+w\cdot(x-\bar x)\leq H_t(\bar x,\bar p)\leq H_t(\bar x,p)-v\cdot(p-\bar p),\qquad\forall x,p\in\bR^n.\]

\begin{dfn}\label{dfn:Hamiltonian}
	For any $\tau\in[\![0:T-1]\!]$, we say that $\{(x_t,p_t)\}_{t=\tau}^T\subset\bR^n\times\bR^n$ is a \emph{discrete-time Hamiltonian trajectory} on $[\![\tau:T]\!]$ provided that 
	\begin{equation}\label{eq:dHt}
		(-\Delta p_t,\Delta x_t)\in\partial H_t(x_{t-1},p_t),\qquad\forall t\in[\![\tau+1,T]\!].
	\end{equation}
\end{dfn}

The characteristic method and main result of this paper is the following.
\begin{thm}
	\label{thm:characteristics}
	Let  $\tau\in[\![0:T-1]\!]$ and $\xi,\eta\in\bR^n$ be given. Suppose that there is a  \emph{discrete-time Hamiltonian trajectory} on $[\![\tau:T]\!]$, say  $\{(x_t,p_t)\}_{t=\tau}^T\subset\bR^n\times\bR^n$, such that $(x_\tau,p_\tau)=(\xi,-\eta)$ and that satisfies the transversality condition \[-p_T\in\partial g(x_T).\]
	Then, $\eta\in\partial\vartheta_\tau(\xi)$. Moreover, the converse holds true if the Hypotheses \eqref{hyp:qual_prim} and \eqref{hyp:qual_dual} are in force.
\end{thm}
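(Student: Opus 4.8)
The plan is to convert the Hamiltonian inclusion \eqref{eq:dHt} into a Fenchel--Young equality linking the primal Lagrangian $L_t$ with the dual Lagrangian $K_t$, and then to rerun the computation behind Proposition~\ref{prop:weak} in the equality case.

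\textbf{Step 1: a Hamiltonian--Lagrangian dictionary.} First I would establish, for each fixed $t\in[\![1:T]\!]$ and all $x,v,q,p\in\bR^n$, the equivalence
\[(q,p)\in\partial L_t(x,v)\qquad\Longleftrightarrow\qquad(-q,v)\in\partial H_t(x,p),\]
the right-hand side being understood in the sense of the concave-convex subdifferential \eqref{eq:subdif_saddle}. This is the discrete counterpart of the classical Euler--Lagrange/Hamiltonian equivalence and is an instance of the theory of partial Legendre transforms of saddle functions; its proof is short. If $(q,p)\in\partial L_t(x,v)$, restricting the subgradient inequality for $L_t$ to perturbations of the second argument only gives $p\in\partial_v L_t(x,v)$, hence $L_t(x,v)+H_t(x,p)=p\cdot v$ since $H_t(x,\cdot)=(L_t(x,\cdot))^*$; feeding the subgradient inequality for $L_t$ into the supremum defining $H_t(x',p)$ then yields $H_t(x',p)\le H_t(x,p)-q\cdot(x'-x)$ for all $x'$, which together with $v\in\partial_p H_t(x,p)$ is exactly $(-q,v)\in\partial H_t(x,p)$. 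The reverse implication is symmetric, using $L_t(x,v)=\sup_{p'}\{p'\cdot v-H_t(x,p')\}$. Combining this with the identity $K_t(p,w)=L_t^*(w,p)$ (read off the definitions) and the Fenchel--Young equality \eqref{eq:FYeq}, the single-time inclusion $(-\Delta p_t,\Delta x_t)\in\partial H_t(x_{t-1},p_t)$ is equivalent to
\[L_t(x_{t-1},\Delta x_t)+K_t(p_t,\Delta p_t)=x_{t-1}\cdot\Delta p_t+\Delta x_t\cdot p_t=x_t\cdot p_t-x_{t-1}\cdot p_{t-1}.\]

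\textbf{Step 2: the direct implication.} Assuming $\{(x_t,p_t)\}_{t=\tau}^T$ is a Hamiltonian trajectory with $(x_\tau,p_\tau)=(\xi,-\eta)$ and $-p_T\in\partial g(x_T)$, I would sum the equality of Step~1 over $t\in[\![\tau+1:T]\!]$, the right-hand side telescoping to $x_T\cdot p_T+\xi\cdot\eta$. Rewriting the transversality condition through \eqref{eq:FYeq} as $g(x_T)+f(p_T)=-x_T\cdot p_T$ (recall $f=g^*(-\cdot)$) and adding it, the term $x_T\cdot p_T$ cancels, so the total primal cost of $\{x_t\}$ plus the total dual cost of $\{p_t\}$ equals exactly $\xi\cdot\eta$. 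Feasibility of $\{x_t\}$ and $\{p_t\}$ makes these costs $\ge\vartheta_\tau(\xi)$ and $\ge\omega_\tau(\eta)$ respectively, while Proposition~\ref{prop:weak} gives $\vartheta_\tau(\xi)+\omega_\tau(\eta)\ge\xi\cdot\eta$; hence all inequalities are equalities, so in particular $\vartheta_\tau(\xi)+\omega_\tau(\eta)=\xi\cdot\eta$. Using $\omega_\tau\ge\vartheta_\tau^*$ (again Proposition~\ref{prop:weak}) this forces $\vartheta_\tau(\xi)+\vartheta_\tau^*(\eta)=\xi\cdot\eta$, i.e.\ $\eta\in\partial\vartheta_\tau(\xi)$ by \eqref{eq:FYeq}. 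Note that no qualification condition is used here.

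\textbf{Step 3: the converse under \eqref{hyp:qual_prim} and \eqref{hyp:qual_dual}.} Under both hypotheses, Propositions~\ref{prop:conjugacy_1} and \ref{prop:conjugacy_2} give $\vartheta_\tau^*=\omega_\tau$, $\omega_\tau^*=\vartheta_\tau$, and attainment of the infima defining $\vartheta_\tau(\xi)$ and $\omega_\tau(\eta)$. Given $\eta\in\partial\vartheta_\tau(\xi)$, \eqref{eq:FYeq} yields $\vartheta_\tau(\xi)+\omega_\tau(\eta)=\xi\cdot\eta$; I would then pick a primal minimizer $\{x_t\}_{t=\tau}^T$ (so $x_\tau=\xi$) and a dual minimizer $\{p_t\}_{t=\tau}^T$ (so $p_\tau=-\eta$), whose total costs therefore sum to $\xi\cdot\eta$. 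On the other hand, the term-by-term inequalities already used in the proof of Proposition~\ref{prop:weak} --- namely $L_t(x_{t-1},\Delta x_t)+K_t(p_t,\Delta p_t)\ge x_t\cdot p_t-x_{t-1}\cdot p_{t-1}$ for each $t\in[\![\tau+1:T]\!]$ and $g(x_T)+f(p_T)\ge-x_T\cdot p_T$ --- telescope to the lower bound $\xi\cdot\eta$. Equality of the sum (all summands being finite since $\xi\in\dom(\vartheta_\tau)$ and $\eta\in\dom(\omega_\tau)$) forces equality in each term, and \eqref{eq:FYeq} then gives $(\Delta p_t,p_t)\in\partial L_t(x_{t-1},\Delta x_t)$ for every $t$ together with $-p_T\in\partial g(x_T)$. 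The dictionary of Step~1 turns the former into $(-\Delta p_t,\Delta x_t)\in\partial H_t(x_{t-1},p_t)$, so $\{(x_t,p_t)\}$ is the sought Hamiltonian trajectory with endpoint $(\xi,-\eta)$ and the required transversality.

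\textbf{Expected obstacle.} The genuinely delicate point is Step~1: making the partial-Legendre-transform correspondence between $\partial L_t$ and the concave-convex subdifferential $\partial H_t$ fully rigorous, in particular handling the possible infinite values of the saddle function $H_t$ (which is guaranteed finite only along feasible, finite-cost processes). The rest is careful bookkeeping --- the direct implication is exactly the equality case of weak duality, and the converse merely adds to it the attainment results from Section~\ref{sec:duality}.
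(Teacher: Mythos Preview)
Your proof is correct, but the route differs from the paper's in a meaningful way. The paper recasts the problem as an instance of the abstract perturbation-duality framework of Rockafellar--Wets \cite{RocWet83}: it introduces the auxiliary endpoint cost $\ell(a,b)=g(b)-a\cdot\eta$ and the perturbation function $\phi$ of \eqref{eq:perturbation}, recognizes the pair (transversality condition, Euler--Lagrange relation \eqref{eq:dEUr}) as precisely the optimality system of \cite[Theorem~1]{RocWet83}, and reads off both directions from \cite[Theorems~1 and~3]{RocWet83} together with Propositions~\ref{prop:conjugacy_1}--\ref{prop:conjugacy_2}. The Hamiltonian/Euler--Lagrange equivalence in your Step~1 is handled in one line by citing \cite[Theorem~37.5]{RocBook70}, so the ``expected obstacle'' you flag is not an issue.

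Your argument, by contrast, stays at the level of Fenchel--Young equalities: you convert each Hamiltonian inclusion into the pointwise identity $L_t(x_{t-1},\Delta x_t)+K_t(p_t,\Delta p_t)=x_t\cdot p_t-x_{t-1}\cdot p_{t-1}$, telescope, and squeeze against weak duality (Proposition~\ref{prop:weak}); for the converse you simply run this backward, forcing equality in every term once minimizers are furnished by Propositions~\ref{prop:conjugacy_1}--\ref{prop:conjugacy_2}. This is more elementary and fully self-contained --- it never invokes the \cite{RocWet83} black boxes --- and makes the ``equality case of weak duality'' structure transparent. The paper's approach, on the other hand, is more conceptual: it identifies $\{p_t\}\in\partial\phi(0)$ as the dual object, which is exactly the viewpoint the authors intend to leverage for the stochastic extension announced in the conclusion.
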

\begin{proof}
	Let $l:\bR^n\times\bR^n\to\bR$ be the convex proper and l.s.c. function given by
	\[l(a,b)=g(b)-a\cdot\eta\]
	Let $\{(x_t,p_t)\}_{t=\tau}^T\subset\bR^n\times\bR^n$ be given. It is not difficult to see that the transversality condition $-p_T\in\partial g(x_T)$ combined with $p_\tau=-\eta$ is equivalent to
	\begin{equation}\label{eq:tc}
		(p_\tau,-p_T)\in\partial\ell(x_\tau,x_T).	
	\end{equation}
	Moreover, since 
	\[H(t,x,p):=\sup_{v\in\bR^n}\left\{p\cdot v-L_t(x,v)\right\},\]
	from \cite[Theorem 37.5]{RocBook70}, we have that the condition of being a \emph{discrete-time Hamiltonian trajectory} on $[\![\tau:T]\!]$ (see equation \eqref{eq:dHt}) is equivalent to the \emph{discrete-time Euler-Lagrange relation}
	\begin{equation}\label{eq:dEUr}
		(\Delta p_t,p_t)\in\partial L_t(x_{t-1},\Delta x_t),\qquad\forall t\in [\![\tau+1:T]\!].
	\end{equation}
	Let $\{(x_t,p_t)\}_{t=\tau}^T$ be as on the statement of the theorem. From \cite[Theorem 1]{RocWet83}, we have that  $\{x_t\}_{t=\tau}^T$
	is an optimal trajectory for
	\begin{equation}\label{eq:auxp}\tag{$\mathcal{P}_0$}
		\inf_{\mathbf{y}\in\mathbf E_\tau}\left\{\sum_{t=\tau+1}^TL_t(y_{t-1},\Delta y_t)+g(y_T)-y_\tau\cdot\eta\right\}=\inf_{y_\tau\in\bR^n}\left\{\vartheta_\tau(y_\tau)-y_\tau\cdot\eta\right\},
	\end{equation}
	and that $\{p_t\}_{t=\tau}^T\in\partial\phi(0)$, where we recall that $\phi$ is the perturbation function given by \eqref{eq:perturbation}. From \cite[Theorem 3]{RocWet83}, it follows that $\{p_t\}_{t=\tau}^T\subset\bR^n$ realizes $\omega_\tau(\eta)$, and moreover, $\mathop{\rm val}\eqref{eq:auxp}=-\omega_\tau(\eta)$.
	Notice that  $\{x_t\}_{t=\tau}^T$ realizes  $\vartheta_\tau(\xi)$ as well, and that $\vartheta_\tau(\xi)=\mathop{\rm val}\eqref{eq:auxp}+\xi\cdot\eta$. From Proposition \ref{prop:weak}, we get that $\vartheta_\tau$ and $\omega_\tau$ are conjugate to each other, and that the Fenchel-Young equality holds at $(\xi,\eta)$, and so $\eta\in\partial\vartheta_\tau(\xi)$.
	
	Conversely, let us assume that \eqref{hyp:qual_prim} and \eqref{hyp:qual_dual} hold and that $\eta\in\partial\vartheta_\tau(\xi)$. In particular $\xi\in\dom(\vartheta_\tau)$, and so, by Proposition \ref{prop:conjugacy_2}, there is an optimal trajectory that realizes $\vartheta_\tau(\xi)$, that is, there is $\{x_t\}_{t=\tau}^T\subset\bR^n$ such that $x_\tau=\xi$ and
	\[\vartheta_\tau(\xi)=\sum_{t=\tau+1}^TL_t(x_{t-1},\Delta x_t)+g(x_T).\]
	
	Notice that the condition $\eta\in\partial\vartheta_\tau(\xi)$ is equivalent to $\xi\in\partial\omega_\tau(\eta)$ because $\vartheta_\tau$ and $\omega_\tau$ are conjugate to each other. Hence, $\eta\in\dom(\omega_\tau)$, and thus Proposition \ref{prop:conjugacy_1} implies that there is an optimal trajectory that realizes $\omega_\tau(\eta)$, that is, $\{p_t\}_{t=\tau}^T\subset\bR^n$ such that $p_\tau=-\eta$ and 
	$$\omega_\tau(\eta)=\sum_{t=\tau+1}^TK_t(p_t,\Delta p_t)+f(p_T).$$
	
	From the Fenchel-Young equality we have that $\vartheta_\tau(\xi)+\omega_\tau(\eta)=\xi\cdot\eta$. Thus $\{x_t\}_{t=\tau}^T\subset\bR^n$ realizes the infimum in \eqref{eq:auxp}. Moreover, from the proof of Proposition \ref{prop:conjugacy_1} we also have that $\{p_t\}_{t=\tau}^T\in\partial\phi(0)$. It follows from \cite[Theorem 1]{RocWet83} that  the transversality condition \eqref{eq:tc} and  the discrete-time Euler-Lagrange relation \eqref{eq:dEUr} holds.
	We have already discussed that these conditions are equivalent to the transversality condition given on the statement and to be a discrete-time Hamiltonian trajectory, so the conclusion follows.
\end{proof}

\section{Examples}
Let us now provide some explicit formulae for two type of optimal control problems that fit into the convex setting we have posed for our analysis. The dynamics that govern both problems are jointly linear in the state and in the control. In particular, throughout this section,  we consider some matrices $A_0,\ldots,A_{T-1}$ and $B_0,\ldots,B_{T-1}$ of dimension  ${n\times n}$ and ${n\times m}$, respectively, and some  given \emph{drifts} $\varphi_0,\ldots,\varphi_{T-1}\in\mathbb{R}^n$; these serve as the building blocks for the linear systems we discuss below.

We also take $\mathcal{X}_{0},\ldots,\mathcal{X}_T\subset\mathbb{R}^n$ and $\mathcal{U}_0\ldots\mathcal{U}_{T-1}\subset\mathbb{R}^m$ to be some given convex, closed, and nonempty sets; the first ones will play the role of (pure) state constraints and the second ones of (pure) control constraints.

\subsection{Linear Quadratic problem with state constraints}
The first example we study is the so-called the linear quadratic (LQ) problem. We show that this model can be studied in the framework of this paper.  Throughout this example, we consider the positive semidefinite matrices $Q_{1},\ldots,Q_T$ and $R_0,\ldots,R_{T-1}$ of dimension ${n\times n}$ and  ${m\times m}$, respectively, associated with the objective cost function of the problem; we set $Q_0$ to be the zero matrix of dimension ${n\times n}$.

Given an initial position $(\tau,\xi)\in[\![0:T-1]\!]\times\mathbb{R}^n$, the (LQ) problem we are interested in can be stated as follows:
\begin{equation}\label{eq:LQ}\tag{LQ}
	\begin{cases}
		\text{ Minimize }&\dfrac{1}{2}\left[\displaystyle\sum_{t=\tau}^{T-1}\left(\|x_{t+1}\|^2_{Q_{t+1}}+\|u_{t}\|^2_{R_t}\right)\right]\\
		\text{ over all }& \{x_t\}_{t=\tau}^T\subset\bR^n\text{ and }\{u_t\}_{t=\tau}^{T-1}\subset\bR^n\text{ with }x_\tau=\xi,\\
		\text{ such that }& \Delta x_{t+1}= A_tx_{t}+B_tu_{t}+\varphi_t,\qquad\forall t\in[\![\tau:T-1]\!],\\
		&u_t\in \mathcal{U}_t,\qquad\forall t\in[\![\tau:T-1]\!],\\
		&x_t\in \mathcal{X}_t,\qquad\forall t\in[\![\tau:T]\!].
	\end{cases}	
\end{equation}
Here we use the notation $\|z\|^2_{M}:=z\cdot M z$ for $z\in\mathbb{R}^d$, and for a $d\times d$ matrix $M$.  

To formulate the problem as in \eqref{eq:primal}, we set the endpoint cost $g:\mathbb{R}^n\to\mathbb{R}$ as $g(a):=\frac{1}{2}\|a\|^2_{Q_T}$, and the Lagrangian $L:[\![1:T]\!]\times\bR^{n}\times\mathbb{R}^{n}\to\bR\cup\{+\infty\}$ as
\[L(t+1,x,v)=\frac{1}{2}\|x\|^2_{Q_t}+\delta_{\mathcal{X}_t}(x)+\inf_{u\in{\mathcal{U}_{t}}}\left\{\frac{1}{2}\|u\|^2_{R_t}\mid v=A_tx+B_tu+\varphi_t\right\}.\]

In order to ensure the l.s.c of the Lagrangian, we assume the following qualification condition.
\begin{align}\label{hyp:lq_calif}\tag{CQ}
	\ker(B_t)\cap\ker(R_t)\cap(\mathcal{U}_t)_\infty=\{0\},\qquad\forall t\in[\![0:T-1]\!].
\end{align}
Given that the recession function of $u\mapsto \frac{1}{2}\|u\|^2_{R_t}+\delta_{\mathcal{U}_t}(u)$ is the indicator function of $\ker(R_t)\cap(\mathcal{U}_t)_\infty$, we can conclude that \eqref{hyp:lq_calif} implies the qualification condition as per \cite[Corollary 3.5.7]{AusTebBook03}.  Consequently, the infimum in the Lagrangian's definition is attained, rendering it an l.s.c. function. 

Set  $$S_t:=\{(x,u,v)\in \bR^n\times \bR^m\times \bR^n \mid v=A_tx+B_tu+\varphi_t\}.$$
We can easily see that we have:
\begin{align*}
	&\vartheta(\tau,\xi)=\inf_{\substack{\{x_t\}\subset \R^n \\x_\tau = \xi}}  \sum_{t=\tau+1}^{T} \left[ L_t(x_{t-1},\Delta x_t) \right] + g(x_T) \\
	&=  \inf_{\substack{\{x_t\}\subset \R^n \\x_\tau = \xi}} \frac{1}{2} \sum_{t=\tau}^{T-1} \left[ \|x_{t}\|_{Q_{t}}^2 +  \delta_{\mathcal{X}_t}(x_t)
	+ \inf_{u_t\in \mathcal{U}_t}\{ \|u_t\|_{R_t}^2  + \delta_{S_t}(x_t,u_t,\Delta x_{t+1}) \} \right] + \frac{1}{2}\|x_T\|_{Q_T}^2\\
	& =  \inf_{\substack{\{x_t\}\subset \R^n \\ \{u_t\}\subset \R^m\\x_\tau = \xi}} \frac{1}{2} \sum_{t=\tau}^{T-1}  \left[ \|x_{t+1}\|_{Q_{t+1}}^2 + \|u_t\|_{R_t}^2 + \delta_{\mathcal{X}_t}(x_t)+\delta_{\mathcal{U}_t}(u_t)
	+ \delta_{S_t}(x_t,u_t,\Delta x_{t+1}) \right].
\end{align*}
Therefore, the Bolza formulation is equivalent to the original problem. Turning our attention to the dual problem, we proceed to calculate both $f$ and $K_t$.
\[
f(b)=g^{\ast}(-b) = \sup_{x\in \bR^n}\left\{- x\cdot b -\frac{1}{2} \|x\|_{Q_T}^2 \right\}.
\]
When $Q_T$ is positive definite, the supremum is achieved at $x=-Q_T^{-1}b$, and consequently, $f(b)= \frac{1}{2}\|b\|_{Q_T^{-1}}^2$. On the other hand
\begin{equation*}
	\begin{split}
		&K(t+1,p,w)  = \sup_{x,v\in \R^n} \left\{  x\cdot w  +  v\cdot p  -L_{t+1}(x,v)\right\} \\
		& = \sup_{x,v\in \R^n} \left\{  x\cdot w  +  v\cdot p  -\frac{1}{2}\left(\|x\|_{Q_{t}}^2 +  \delta_{\mathcal{X}_t}(x)+ \inf_{u\in \mathcal{U}_t}\{ \|u\|_{R_t}^2 + \delta_{S_t}(x,u,v) \}\right)\right\} \\
		& = \sup_{\substack{x\in\mathcal{X}_t \\ u\in\mathcal{U}_t \\ v\in \R^n}}\left\{   x\cdot w  +  v\cdot p  -\frac{1}{2}\|x\|_{Q_t}^2 - \frac{1}{2}\|u\|_{R_t}^2 \mid  v=A_tx+B_tu+\varphi_t \right\}\\
		& = \sup_{\substack{x\in\mathcal{X}_t \\ u\in\mathcal{U}_t}} \left\{  x\cdot (A_t^Tp+w) -\frac{1}{2}\|x\|_{Q_t}^2 +  u\cdot B_t^Tp  -\frac{1}{2}\|u\|_{R_t}^2 \right\} + \varphi_t\cdot p ,\\
		& = \sup_{x\in\mathcal{X}_t} \left\{ x\cdot (A_t^Tp+w) -\frac{1}{2}\|x\|_{Q_t}^2\right\} + \sup_{u\in\mathcal{U}_t}\left\{ u\cdot B_t^Tp  -\frac{1}{2}\|u\|_{R_t}^2 \right\} + \varphi_t\cdot p .
	\end{split} 
\end{equation*}

We now focus on providing conditions on the primal problem to ensure \eqref{hyp:qual_prim} and \eqref{hyp:qual_dual}. Recall that $\Gamma_L(t,x) =  \{v\in \bR^n : L_t(x,v)\in \bR \}$, then
\[\Gamma_L(t,x)  \left\{v\in \bR^n\mid \|x\|_{Q_{t-1}}^2 +  \delta_{\mathcal{X}_{t-1}}(x)\\
+ \inf_{u\in \mathcal{U}_{t-1}}\left\{ \|u\|_{R_{t-1}}^2 + \delta_{S_{t-1}}(x,u,v) \right\} \in \bR\right\}.
\]
From this, it is staightforward that if $x\notin\mathcal{X}_{t-1}$, then $\Gamma_L(t,x)=\emptyset$. If $x\in\mathcal{X}_{t-1}$, then  $v\in \Gamma_L(t,x)$ if and only if  $\inf\left\{ \|u\|_{R_{t-1}}^2 + \delta_{S_{t-1}}(x,u,v)\mid u\in \mathcal{U}_{t-1} \right\} \in \bR$. Given that the function $\|\cdot\|_{R_t}^2$ is real valued, and bounded from below, it follows that if $x\in\mathcal{X}_{t-1}$, and
\begin{equation*}
	\begin{split}
		v\in \Gamma_L(t,x) & \Leftrightarrow \exists u\in\mathcal{U}_{t-1} \ \textrm{s.t.} \ v=A_{t-1}x+B_{t-1}u+\varphi_{t-1}, \\
		& \Leftrightarrow v\in B_{t-1}\mathcal{U}_{t-1} + A_{t-1}x+ \varphi_{t-1}.
	\end{split}
\end{equation*}
Therefore, if $x\in\mathcal{X}_{t-1}$, $\Gamma_L(t,x)=B_{t-1}\mathcal{U}_{t-1}+A_{t-1}x+\varphi_{t-1}$.  As $\mathcal{U}_{t-1}$ is nonempty, we have that $\X(t)=\mathcal{X}_{t-1}$ for all $t$.  Thus, if $\mathcal{X}_t=\bR^n$ for every $t$, \eqref{hyp:qual_prim} holds true due to the fact that
\begin{equation*}
	\ri (\Gamma_L(t,x)) =  \ri(B_{t-1}(\mathcal{U}_{t-1}))+A_{t-1}x+\varphi_{t-1} = B_{t-1}(\ri (\mathcal{U}_{t-1}))+A_{t-1}x+\varphi_{t-1}
\end{equation*}
is always nonempty as $\mathcal{U}_{t-1}$ is convex.

We now study the case where $\mathcal{X}_1=\ldots=\mathcal{X}_T=X$. We look for conditions to have an equilibrium point in the system, that is, $x\in \ri(X)$ such that $0\in B_t(\ri(\mathcal{U}_t))+A_tx+\varphi_t$. This condition implies \eqref{hyp:qual_prim}, as we can choose $\bar{x}_1=\ldots=\bar{x}_T=x$. To verify the existence of such a point, we can solve the following linear system of equations
\begin{equation*}
	\begin{bmatrix}
		A_t  & B_t 
	\end{bmatrix} \begin{bmatrix}
		x \\ u
	\end{bmatrix} = -\varphi_t
\end{equation*}
for each $t$, and intersect the solution sets with $\ri(X)\times \ri(\mathcal{U}_t)$. If there is some $\bar{x}$ that belongs to this intersection for every $t$, then we have \eqref{hyp:qual_prim}. In particular, we can check if  $0\in \ri(X)$ and $0\in B_t(\ri(\mathcal{U}_t))+\varphi_t$. If $\varphi_t=0$, we can replace it with simply $0\in \ri(X)$ and $0\in\ri(\mathcal{U}_t)$. Moreover, if $X$ has a nonempty interior and $A_t$ has rank $n$, then it is enough to check if $0\in\mathop{\rm int}(X)$ and $0\in\mathcal{U}_t$. This is due to the fact that for $\varepsilon>0$ we can take  $\xi\in \R^m$ with $|\xi|$ a small enough such that $\xi\in \ri(\mathcal{U}_t)$, and as $A_t$ has rank $n$ there exists $\tilde{x}\in \mathbb{B}(0,\varepsilon)$ such that $B_t\xi=-A_t(\tilde{x})$.

In the general case, by the same argument as before, we have that if the intersection between the solution set of the following system 
\begin{equation}\label{eq_sys}
	\begin{split}
		\begin{bmatrix}
			A_1+I_n & -I_n & B_1 
		\end{bmatrix} \begin{bmatrix}
			x_1 \\ x_2 \\ u_1
		\end{bmatrix} & = -\varphi_1, \\
		\begin{bmatrix}
			A_2+I_n & -I_n  & B_2 
		\end{bmatrix} \begin{bmatrix}
			x_2 \\ x_3 \\ u_2
		\end{bmatrix} & = -\varphi_2, \\
		& \vdots \\
		\begin{bmatrix}
			A_{T-1}+I_n  & -I_n & B_{T-1} 
		\end{bmatrix} \begin{bmatrix}
			x_{T-1} \\ x_T \\ u_{T-1}
		\end{bmatrix} & = -\varphi_{T-1},            
	\end{split}
\end{equation}
and the set $\ri(\mathcal{X}_1)\times\ri(\mathcal{U}_1)\times...\times\ri(\mathcal{U}_{T-1})\times\ri(\mathcal{X}_T)$ is nonempty, then condition \eqref{hyp:qual_prim} holds. In particular, if the following conditions are satisfied: $0\in \ri(\mathcal{X}_t)$, $0\in \ri(\mathcal{U}_t)$ for every $t$, $0\in {\rm int}(\mathcal{X}_t)$ ,$0\in\mathcal{U}_t$, and $A_t$ has rank $n$ for every $t$,  then condition \eqref{hyp:qual_prim} remains true.

In terms of the dual problem, to find conditions for \eqref{hyp:qual_dual}, recall that $\Gamma_K(t+1,p) = \{w\in \bR^n\mid  K_t(p+w,w)\in \bR\}$. Then, $w\in\Gamma_K(t+1,p)$ if and only if
\[ \sup_{x\in\mathcal{X}_t}\{x\cdot(A_t^Tp+A_t^Tw+w) -\frac{1}{2}\|x\|_{Q_t}^2\} + \sup_{u\in\mathcal{U}_t}\{u\cdot (B_t^Tp+B_t^Tw) -\frac{1}{2}\|u\|_{R_t}^2\} + \varphi_t\cdot (p+w) \in \bR.
\]
As the functions $(x,w)\mapsto x\cdot(A_t^Tp+A_t^Tw+w)$ and $(u,w)\mapsto u\cdot (B_t^Tp+B_t^Tw)$ are linear in $x$ and $u$ respectively, it follows that
\begin{equation*}
	\begin{split}
		w \in \Gamma_K(t+1,p) & \Leftrightarrow \forall x \in (\mathcal{X}_t)_{\infty}\cap \ker(Q_t) ,\quad   x\cdot(A_t^Tp+A_t^Tw+w) \leq 0, \\
		& \quad \wedge  \forall u \in (\mathcal{U}_t)_{\infty}\cap \ker(R_t),\quad   u\cdot (B_t^Tp+B_t^Tw) \leq 0.
	\end{split}
\end{equation*}
Note that $\Gamma_K(t+1,p)$ is convex, which implies that $\ri(\Gamma_K(t+1,p))$ is nonempty. Therefore:
\begin{itemize}
	\item If $Q_t$ is definite positive or $\mathcal{X}_t$ is compact for every $t$, then given $p\in \bRn$ the first condition holds true for every $w\in \bRn$. We can choose $w=-p$ to ensure that the second condition also remains true. Thus, we conclude that in the case $\mathbf{P}(t)=\bRn$ for every $t$, the condition \eqref{hyp:qual_dual} holds true.
	\item If $R_t$ is definite positive or $\mathcal{U}_t$ is compact for every $t$, then given $p\in \bRn$ the second condition is true for every $w\in \bRn$. If we also have that the matrix $A_t^T+I_n$ has rank $n$, then we can choose $w$ such that  $(A_t^T+I_n)w=-A_t^Tp$ and so $\mathbf{P}(t)=\bRn$ for every $t$, and \eqref{hyp:qual_dual} holds in this case.
\end{itemize}
Furthermore, if $\mathcal{X}_t$ and $\mathcal{U}_t$ are cones, then $(\mathcal{X}_t)_{\infty}=\mathcal{X}_t$ and $(\mathcal{U}_t)_{\infty}=\mathcal{U}_t$, simplifying the conditions for easier analysis and computations.

Lastly, we focus on the unrestricted case with $\mathcal{X}_t=\bRn$ and $\mathcal{U}_t=\bR^m$ for every $t$. Given $p\in \bRn$, we have that $w\in  \Gamma_K(t+1,p)$ if and only if for any $x \in  \ker(Q_t)$ and any $u \in  \ker(R_t)$ we have
\[ x\cdot(A_t^Tp+A_t^Tw+w) \leq 0\quad\text{and}\quad
\quad   u\cdot (B_t^Tp+B_t^Tw) \leq 0.
\]
In this case, $\mathbf{P}(t)$ is a vector space. Indeed, let $p_1,p_2\in \mathbf{P}(t)$, and $w_1\in  \Gamma_K(t+1,p_1)$, $ w_2\in \Gamma_K(t+1,p_2)$. Taking $w=w_1+w_2\in \bRn$, it is clear that 
\begin{gather*}
	\forall x \in \ker(Q_t) ,\quad  x\cdot(A_t^T(p_1+p_2)+A_t^T(w_1+w_2)+w_1+w_2) \leq 0,\\
	\forall u \in \ker(R_t),\quad   u\cdot (B_t^T(p_1+p_2)+B_t^T(w_1+w_2)) \leq 0,
\end{gather*}
and thus $p_1+p_2\in \mathbf{P}(t)$. Let now $p\in \mathbf{P}(t)$, $w\in  \Gamma_K(t+1,p)$ and $\lambda\in \bR$. If $\lambda \geq 0$, it is evident that $\lambda p\in \mathbf{P}(t)$ as $\lambda w\in  \Gamma_K(t+1,\lambda p)$. If $\lambda<0$, we can take $\lambda w$ and obtain
\begin{gather*}
	\forall x \in  \ker(Q_t) ,\quad   -x\cdot (A_t^T(\lambda p)+A_t^T(\lambda w)+\lambda w) \leq 0, \\
	\forall u \in  \ker(R_t),\quad   -u\cdot (B_t^T(\lambda p)+B_t^T(\lambda w))\leq 0.
\end{gather*}
As $\ker(Q_t)$ and $\ker(R_t)$ are vector spaces, $\lambda w\in  \Gamma_K(t+1,\lambda p)$ and therefore $\lambda p\in \mathbf{P}(t)$.  Consequently, $\mathbf{P}(t)$ is a vector space. This implies, in particular, that $0\in \ri(\mathbf{P}(t))$ for every $t$.  We take $\bar{p}_1=\ldots=\bar{p}_T=0$ in \eqref{hyp:qual_dual}, as it's easy to see that the set $ \Gamma_K(t+1,0)$ is a vector space too, and therefore $0\in \ri( \Gamma_K(t+1,0))$. In fact, let $w_1,w_2\in  \Gamma_K(t+1,0)$ and $\lambda\in \bR$. Then
\begin{gather*}
	\forall x \in \ker(Q_t) ,\quad  x\cdot (A_t^T(w_1+w_2)+w_1+w_2) \leq 0, \\
	\forall u \in \ker(R_t),\quad   u\cdot (B_t^T(w_1+w_2)) \leq 0,
\end{gather*}
and
\begin{gather*}
	\forall x \in  \ker(Q_t) ,\quad  \langle x,A_t^T(\lambda w)+\lambda w\rangle \leq 0, \\
	\forall u \in  \ker(R_t),\quad  \langle u,B_t^T(\lambda w)\rangle \leq 0,
\end{gather*}
if $\lambda\geq 0$, and we can use the same argument as before for $\lambda<0$.
Therefore, in the unrestricted case \eqref{hyp:qual_dual} holds.  

We summarize the main results of this subsection in the following proposition:
\begin{prop}
	The problem \eqref{eq:LQ} admits a Bolza formulation under \eqref{hyp:lq_calif} and therefore Theorem \ref{thm:characteristics} holds true. Moreover, the converse also holds true if one of the following conditions holds
	\begin{enumerate}
		\item $\mathcal{X}_t=\bRn$, for every $t\in[\![1:T]\!]$ and either 
		\begin{enumerate}
			\item $\mathcal{U}_t=\bR^m$ for all $t\in[\![1:T]\!]$,
			\item $Q_t$ is positive definite for all $t\in[\![1:T]\!]$, 
			\item $\mathcal{U}_t$ is compact and $A_t^T+I_n$ has rank $n$ for all $t\in[\![1:T]\!]$, or
			\item $R_t$ is positive definite and $A_t^T+I_n$ has rank $n$ for all $t\in[\![1:T]\!]$.
		\end{enumerate}
		\item $\mathcal{X}_t$ is compact or $Q_t$ is positive definite for every $t\in[\![1:T]\!]$ and either
		\begin{enumerate}
			\item The intersection between the solution set of \eqref{eq_sys} and the set $\ri(\mathcal{X}_1)\times\ri(\mathcal{U}_1)\times\ldots\times\ri(\mathcal{U}_{T-1})\times\ri(\mathcal{X}_T)$ is nonempty. In particular,  $0\in \ri(\mathcal{X}_t),0\in \ri(\mathcal{U}_t)$ for all $t\in[\![1:T]\!]$, or
			\item $0\in\mathop{\rm int}(\mathcal{X}_t),0\in\mathcal{U}_t$ and  $A_t$ has rank $n$ for every $t\in[\![1:T]\!]$.
		\end{enumerate}
		\item $\mathcal{U}_t$ is compact or $R_t$ is positive definite and $A_t^T+I_n$ has rank $n$ for all $t\in[\![1:T]\!]$, and either
		\begin{enumerate}
			\item The intersection between the solution set of \eqref{eq_sys} and the set $\ri(\mathcal{X}_1)\times\ri(\mathcal{U}_1)\times\ldots\times\ri(\mathcal{U}_{T-1})\times\ri(\mathcal{X}_T)$ is nonempty. In particular,  $0\in \ri(\mathcal{X}_t),0\in \ri(\mathcal{U}_t)$ for all $t\in[\![1:T]\!]$, or
			\item $0\in\mathop{\rm int}(\mathcal{X}_t),0\in\mathcal{U}_t$ and $A_t$ has rank $n$ for all $t\in[\![1:T]\!]$.
		\end{enumerate}
	\end{enumerate}
\end{prop}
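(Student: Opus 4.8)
The statement splits into the forward claim (a Bolza formulation exists and \eqref{hyp:convex} holds, so Theorem~\ref{thm:characteristics} applies) and the converse claim (the listed hypotheses force \eqref{hyp:qual_prim} and \eqref{hyp:qual_dual}). For the forward part I would observe that the chain of equalities displayed above already exhibits \eqref{eq:LQ} as an instance of \eqref{eq:primal} with $g(a)=\tfrac12\|a\|_{Q_T}^2$ and the indicated Lagrangian $L$; it then remains only to verify \eqref{hyp:convex}. Properness and convexity are immediate (each $L_t$ is a partial infimal projection over the convex variable $u$ of a jointly convex function: a convex quadratic plus indicators of convex sets), so the single genuine point is lower semicontinuity of $L_t$. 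For this I would note that the recession function of $u\mapsto\tfrac12\|u\|_{R_t}^2+\delta_{\mathcal U_t}(u)$ is $\delta_{\ker(R_t)\cap(\mathcal U_t)_\infty}$, whence \eqref{hyp:lq_calif} is exactly the constraint qualification of \cite[Corollary~3.5.7]{AusTebBook03} guaranteeing that the infimum defining $L_t$ is attained and that $L_t$ is l.s.c. With \eqref{hyp:convex} in hand the forward implication of Theorem~\ref{thm:characteristics} applies verbatim.

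For the converse I must check \eqref{hyp:qual_prim} and \eqref{hyp:qual_dual} in each case. On the primal side I would use the identities $\X(t)=\mathcal X_{t-1}$ and $\ri(\Gamma_L(t,x))=B_{t-1}\ri(\mathcal U_{t-1})+A_{t-1}x+\varphi_{t-1}$ established above; since $\dom(g)=\bR^n$ the terminal requirement in \eqref{hyp:qual_prim} is vacuous, so that hypothesis reduces to finding $\{\bar x_t\}$ with $\bar x_{t-1}\in\ri(\mathcal X_{t-1})$ and $\bar x_t-(A_{t-1}+I_n)\bar x_{t-1}-\varphi_{t-1}\in B_{t-1}\ri(\mathcal U_{t-1})$ for all $t$ — equivalently, by the relative-interior calculus for linear images and Minkowski sums of convex sets, to solvability of the stacked system \eqref{eq_sys} within $\ri(\mathcal X_1)\times\ri(\mathcal U_1)\times\cdots\times\ri(\mathcal U_{T-1})\times\ri(\mathcal X_T)$. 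When $\mathcal X_t=\bR^n$ (case 1) this holds by a backward recursion on $t$, each $\ri(\mathcal U_{t-1})$ being nonempty; in cases 2 and 3 the listed sufficient conditions (nonemptiness of the explicit intersection, or $0\in\mathrm{int}(\mathcal X_t)$, $0\in\mathcal U_t$ with $\mathrm{rank}(A_t)=n$, the latter allowing a small perturbation $\tilde x$ with $A_t\tilde x=-B_t\xi$ to be absorbed) are precisely what produces such a solution.

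On the dual side I would use the conic description derived above: $w\in\Gamma_K(t+1,p)$ iff $A_t^T(p+w)+w$ lies in the polar cone of $(\mathcal X_t)_\infty\cap\ker(Q_t)$ and $B_t^T(p+w)$ lies in the polar cone of $(\mathcal U_t)_\infty\cap\ker(R_t)$. From this one reads off that $\P(t)=\bR^n$ for every $t$ — whence a strictly feasible dual trajectory exists, e.g. $\bar p_t\equiv 0$, using $0\in\ri(\dom(f))$ because $\dom(f)=\mathrm{range}(Q_T)$ is a subspace — whenever, for every $t$, either $Q_t$ is positive definite or $\mathcal X_t$ is compact (the first membership becomes vacuous and $w=-p$ settles the second), or $A_t^T+I_n$ has rank $n$ together with either $R_t$ positive definite or $\mathcal U_t$ compact (the second membership becomes vacuous and $(A_t^T+I_n)w=-A_t^Tp$ settles the first). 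Matching this against the statement: clause 1(a) is the fully unrestricted case already handled, while 1(b)--1(d) and the dual-side clauses of cases 2 and 3 each fall under one of the two criteria just listed; together with the primal verification, this gives both \eqref{hyp:qual_prim} and \eqref{hyp:qual_dual} in every case, so the converse part of Theorem~\ref{thm:characteristics} applies.

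I expect essentially all of this to be routine; the two places that call for genuine care are the lower semicontinuity of $L_t$ under \eqref{hyp:lq_calif} and the faithful translation of the recursive strict-feasibility relations — for $\{\bar x_t\}$ on the primal side and for $\{\bar p_t\}$ on the dual side — into solvability of a single stacked linear system meeting a product of relative interiors, where one must keep careful track of the index conventions and of the fact (harmless here) that the relative-interior calculus is exact only for images and sums of convex sets.
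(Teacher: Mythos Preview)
Your proposal is correct and follows the same route as the paper: the proposition is simply a summary of the computations carried out in the subsection, and you reproduce that logic faithfully --- verifying \eqref{hyp:convex} via \eqref{hyp:lq_calif} and \cite[Corollary~3.5.7]{AusTebBook03}, reading off $\X(t)=\mathcal X_{t-1}$ and $\Gamma_L(t,x)=B_{t-1}\mathcal U_{t-1}+A_{t-1}x+\varphi_{t-1}$ for \eqref{hyp:qual_prim}, and using the two polar-cone memberships to argue $\P(t)=\bR^n$ for \eqref{hyp:qual_dual}.

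One small caution on the dual side: your suggested witness $\bar p_t\equiv 0$ works when $\Gamma_K(t,0)$ is a linear subspace --- as in case~1(a) (where the paper explicitly checks this) and in cases~1(c)/1(d) (where the non-vacuous membership condition cuts out an affine set) --- but in cases~1(b) and~2 the set $(\mathcal U_t)_\infty\cap\ker(R_t)$ may be a genuine pointed cone, so $\Gamma_K(t,0)=\{w:B_t^Tw\in[(\mathcal U_t)_\infty\cap\ker(R_t)]^\circ\}$ can have $0$ on its relative boundary rather than in its relative interior. There the correct argument is the one both you and the paper state just before the ``e.g.'': $\P(t)=\bR^n$ and each $\Gamma_K(t,p)$ is convex with nonempty relative interior, so a strictly feasible dual trajectory can be built by a forward recursion (and the terminal condition $\bar p_T\in\ri(\dom f)$ is handled because $\dom(f)=\mathrm{range}(Q_T)$ is a subspace, or is all of $\bR^n$ when $Q_T$ is positive definite). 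This is a cosmetic fix, not a change of strategy.
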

Note that there are many possible combinations of the given conditions over time that also allow the converse to be true, but we omit the details.

\subsection{Linear-convex problem with mixed constraints}
The second model we study is a linear-convex problem with mixed constraints, that is, constraints that affect jointly the state and control of the system (at the same time). Following an analogous procedure, we first show that this model can be stated as in our framework.  Throughout this part, $\ell_0,\ldots,\ell_{T-1}:\bR^{n}\times\bR^{m}\to\bR$ and $f_0,\ldots,f_{T-1}:\bR^{n}\times\bR^{m}\to\bR$ are given real-valued convex functions,

The optimal control problem we deal with now is the following:
\begin{equation}\tag{P$_\text{mix}$}\label{prob:OC}
	\left\{	\begin{array}{lll}
		\text{ Minimize }&\displaystyle\sum_{t=\tau}^{T-1}\ell_t(x_t,u_t)+g(x_T),\\
		\text{ over all }& \{x_t\}_{t=\tau}^T\subset\bR^n\text{ and }\{u_t\}_{t=\tau}^{T-1}\subset\bR^n\text{ with }x_\tau=\xi,\\
		\text{ such that }& \Delta x_{t+1}= A_tx_{t}+B_tu_{t}+\varphi_t,\quad\forall t\in[\![\tau:T-1]\!],\\
		&f_t(x_t,u_t)\leq 0,\qquad\forall t\in[\![\tau:T-1]\!],\\
		&u_t\in \mathcal{U}_t,\qquad\forall t\in[\![\tau:T-1]\!],\\
		&x_t\in \mathcal{X}_t,\qquad\forall t\in[\![\tau:T]\!].\\
	\end{array}\right.
\end{equation}
We define the Lagrangian associated to \eqref{prob:OC} as 
\[L_{t+1}(x,v)=\inf_{u\in \R^m}\left\{\ell_t(x,u)+\delta_{\mathcal{U}_t}(u)+\delta_{D_t}(x,u) +\delta_{S_t}(x,u,v)\right\} + \delta_{\mathcal{X}_t}(x),\] where $D_t=\{(x,u) : f_t(x,u)\leq 0\}$. It is not difficult to see that $\val(\rm{P_{mix}})=\vartheta_{\tau}(\xi)$.

However, we still need $L_t$ to satisfy the conditions of a convex Bolza formulation. 
\begin{enumerate}[label=\alph*)]
	\item {\it Convexity}. Write $L_{t+1}$ as $ G_t(x,v) + \delta_{\mathcal{X}_t}(x)$, where $G_t(x,v)=\inf_{u\in \R^m}g_t(x,v,u)$ and $g_t(x,v,u)=\ell_t(x,u)+\delta_{\mathcal{U}_t}(u)+\delta_{D_t}(x,u) +\delta_{S_t}(x,u,v)$. Then, as $g_t$ is clearly convex, $G_t$ is convex too, and so $L_{t+1}$ is convex.
\end{enumerate}
From now on, we make the following assumption 
\begin{equation}\label{A}
	\forall t,\, \exists \psi_t:\R^n\to \R\ \textrm{u.s.c.} \quad \textrm{s.t.} \quad \sup_{u\in\mathcal{U}_t}\{|u|\mid f_t(x,u)\leq 0\}\leq \psi_t(x) \quad \forall x\in\mathcal{X}_t, \tag{A}
\end{equation}
where u.s.c. stands for upper semicontinuous functions.  This is the discrete version of \cite[Assumption (A5)]{BecHer22}. Define $\Omega_t=\{(x,u):x\in\mathcal{X}_t,u\in\mathcal{U}_t,\ f_t(x,u)\leq 0\}$. 
\begin{enumerate}[label=\alph*),start=2]
	\item {\it Properness}. It is clear that $L_{t+1}\not\equiv +\infty$ as long as there is a feasible solution to the problem. We show that we also have that there is no $(x,v)$ such that $L_{t+1}(x,v)=-\infty$. For the sake of contradiction, suppose that there is one. Then there is a sequence $\{u_k\}$ such that $(x,u_k)\in \Omega_t$ for every $k\in \bN$ and $\ell_t(x,u_k)\to -\infty$. By \eqref{A}, $\{u_k\}$ is bounded by $\psi_t(x)$, which contradicts the continuity of $\ell_t$ ($\ell_t$ is a real valued convex function). Therefore, $L_{t+1}$ is proper.
	
	\item {\it Lower semi-continuity}. To prove that $L_{t+1}$ is l.s.c., we rely on \cite[Chapter 5]{RoyWetBook22}.  We first show that $g_t$ is level-bounded in $u$ locally uniformly in $(x,v)$, as defined in \cite{RoyWetBook22}. Let $(\bar{x},\bar{v})\in \bRn\times \bRn$ and $\alpha\in \R$. Fix $\varepsilon>0$ and let $M>0$ such that $\sup_{x\in \mathbb{B}(\bar{x},\varepsilon)}\psi_t(x) \leq M$ (its existence is guaranteed by the upper semi-continuity of $\psi_t$). We define $B=\mathbb{B}_{\R^m}(0;M)$, the ball in $\R^m$, centered at 0 with radius $M$.

	If $u\in \{u\mid g(x,v,u)\leq \alpha\}$, with $x\in \mathbb{B}(\Bar{x},\varepsilon)$ and $v\in \R^n$, then $u\in\mathcal{U}_t$ and $f_t(x,u)\leq 0$. By \eqref{A}, 
	\begin{equation*}
		|u| \leq \psi_t(x) \leq M,
	\end{equation*}
	and we have that $u\in B$. Therefore, $\{u\mid g(x,v,u)\leq \alpha\}\subset B$ for every $(x,v)\in \mathbb{B}((\bar{x},\bar{v}),\varepsilon)$ and then $g_t$ is level-bounded in $u$ locally uniformly in $(x,v)$.

	We prove now that $G_t$ is l.s.c. Let  $(\bar{x},\bar{v})\in \dom(G_t)$ and $(x_k,v_k)\to (x,v)$. We consider two cases. First, suppose that there is a subsequence such that $G_t(x_{k_j},v_{k_j})<+\infty$ for every $j\in \bN$. By continuity of $\ell_t$, we can choose $\{(x_{k_j},v_{k_j})\}$ so that $G_t(x_{k_j},v_{k_j})<M$ for some $M>0$ large enough. Then, $\{G_t(x_{k_j},v_{k_j})\}$ is bounded from above. As we have already proved that $g_t$ is level-bounded in $u$ locally uniformly in $(x,v)$, by \cite[Proposition 5.4]{RoyWetBook22} we obtain that the sequence $g_t(x_{k_j},v_{k_j},\cdot)$ is tight in the sense of \cite[Definition 5.3]{RoyWetBook22}. With this, we can follow the steps on \cite[Theorem 5.6(a)]{RoyWetBook22} to prove that 
	\begin{equation*}
		\liminf_{k\to\infty} (G_t(x_k,v_k)) = \liminf_{j\to\infty} (G_t(x_{k_j},v_{k_j}) \geq G_t(\Bar{x},\Bar{v}).
	\end{equation*}
	In the second case, there exists a natural number $n$ from which every element of the sequence is $+\infty$. It follows that
	\begin{equation*}
		\liminf_{k\to\infty} (G_t(x_k,v_k)) = +\infty  \geq G_t(\Bar{x},\Bar{v})
	\end{equation*}
	Therefore, $G_t$ is l.s.c. in $(\Bar{x},\Bar{v})$. To conclude, we show that $\dom G_t$ is closed.
	
	Let $\{(x_k,v_k)\}\subset \dom G_t$ such that $(x_k,v_k)\to (x,v)$. This implies that for every $k\in \bN$ there is $u_k\in\mathcal{U}_t$ such that $f_t(x_k,u_k)\leq 0$ and $v_k=A_tx_k+B_tu_k+\varphi_t$. The sequence $\{x_k\}$ is bounded because it is convergent. Thus, by \eqref{A} $\{u_k\}$ is bounded too. Then, there is a convergent subsequence of $\{u_k\}$. Let $u$ be the limit of this subsequence. By continuity of $f_t$, we obtain that $f_t(x,u)\leq 0$ and $v=A_tx+B_tu+\varphi_t$, concluding that $(x,v)\in \dom G_t$. This implies that $G_t$ is lsc and so is $L_{t+1}$.
\end{enumerate}
With the above arguments, $L_{t+1}$ satisfies all of the conditions for a Bolza type formulation. Let us  now focus on studying the dual problem.  In particular we are interesting in computing $K_t$
\begin{equation*}
	\begin{split}
		K(t,p,w) & = \sup_{x,v\in \R^n} \{ x\cdot w  +  v\cdot p  -L_t(x,v)\},\\
		& = \sup_{\substack{(x,u)\in \Omega_t \\ v=A_tx+B_tu+\varphi_t}}\{ x\cdot w  +  v\cdot p  -\ell_t(x,u)\},\\
		& = \sup_{(x,u)\in \Omega_t} \{x\cdot (w+A_t^Tp) +  u\cdot B_t^Tp  -\ell_t(x,u)\} +  \varphi_t\cdot p .
	\end{split}
\end{equation*}

Our focus now is on establishing conditions on the primal problem to satisfy \eqref{hyp:qual_prim} and \eqref{hyp:qual_dual}. Firstly, we proceed to calculate $\Gamma_L(t+1,x)$:

\begin{equation*}
	\begin{split}
		\Gamma_L(t+1,x) & = \{v\in \bRn \mid L_{t+1}(x,v)\in \bR\}, \\
		& = \left\{v\in \bRn \mid  \inf_{u\in\mathcal{U}_t}\{\ell_t(x,u): f_t(x,u)\leq 0 \, \wedge \, v=A_tx+B_tu+\varphi_t\}\in \bR\right\},
	\end{split}
\end{equation*}
for $x\in\mathcal{X}_t$. If $x\notin\mathcal{X}_t$, then $\Gamma_L(t+1,x)=\emptyset$ trivially. By \eqref{A}, we have that $\forall x\in\mathcal{X}_t$:
\begin{equation*}
	\inf_{u\in\mathcal{U}_t}\{\ell_t(x,u):f_t(x,u)\leq 0 \} \geq \inf_{|u|\leq \psi_t(x)}\{\ell_t(x,u)\} > -\infty.
\end{equation*}
Therefore, if $x\in\mathcal{X}_t$:
\begin{equation*}
	\begin{split}
		v\in \Gamma_L(t+1,x) & \Leftrightarrow \exists u\in\mathcal{U}_t \ \textrm{s.t.} \ v=A_tx+B_tu+\varphi_t \, \wedge \, f_t(x,u)\leq 0, \\
		& \Leftrightarrow v\in B_t(\mathcal{U}_t\cap \{u: f_t(x,u)\leq 0\})+A_tx+\varphi_t.
	\end{split}
\end{equation*}
As the set $\{u: f_t(x,u)\leq 0\}$ is clearly convex, we note that $\ri(\Gamma_L(t+1,x))=B_t(\ri(\mathcal{U}_t)\cap \ri(\{u: f_t(x,u)\leq 0\}))+A_tx+\varphi_t$ and thus it is always nonempty. With this, 
\begin{equation*}
	\X(t) =\mathcal{X}_t \cap \{x\mid \exists u\in\mathcal{U}_t \, \textrm{s.t.} \ f_t(x,u)\leq 0\}. 
\end{equation*}
By convexity of $\mathcal{U}_t$ and $f_t$, the set $\{x\mid \exists u\in\mathcal{U}_t \, \textrm{s.t.} \ f_t(x,u)\leq 0\}$ is convex. It follows that $\ri(\X(t)) = \ri(\mathcal{X}_t)\cap \ri(\{x\mid \exists u\in\mathcal{U}_t \, \textrm{s.t.} \ f_t(x,u)\leq 0\})$.

Considering this, we can address the system \eqref{eq_sys} explored in the problem \eqref{eq:LQ} to seek a sequence that meets \eqref{hyp:qual_prim}, provided there exists at least one solution satisfying a Slater condition $f_t(x_t,u_t)<0$ for all $t$. The arguments for the equilibrium point and the specific cases examined in the previous section are applicable to this problem as well.

Next, our attention shifts to establishing conditions for \eqref{hyp:qual_dual}. Note that:
\[\Gamma_K(t+1,p)  =  \bigg \{w\in \bRn \mid  \sup_{(x,u)\in \Omega_t}\{ x\cdot(w+A_t^Tw+A_t^Tp) \\
+  u\cdot (B_t^Tp+B_t^Tw) -\ell_t(x,u)\}\in \bR \bigg \},
\]
is convex. In fact, let $w_1,w_2\in \Gamma_K(t+1,p)$ and $\lambda\in [0,1]$. We have that
\begin{equation*}
	\begin{split}
		&\sup_{(x,u)\in \Omega_t}\{ x\cdot\left(\lambda w_1+(1-\lambda)w_2+A_t^T(\lambda w_1+(1-\lambda)w_2)+A_t^Tp\right) \\
		&\qquad + u\cdot \left(B_t^Tp+B_t^T(\lambda w_1+(1-\lambda)w_2)\right) -\ell_t(x,u)\} \\ 
		&= \lambda \sup_{(x,u)\in \Omega_t}\{x\cdot(w_1+A_t^Tw_1+A_t^Tp) +  u\cdot (B_t^Tp+B_t^Tw_1) -\ell_t(x,u)\}\\
		&\qquad +(1-\lambda)\sup_{(x,u)\in \Omega_t}\{ x\cdot (w_2+A_t^Tw_2+A_t^Tp) +  u\cdot (B_t^Tp+B_t^Tw_2) -\ell_t(x,u)\} \in \bR,
	\end{split}
\end{equation*}
and so $\lambda w_1+(1-\lambda)w_2\in \Gamma_K(t+1,p)$. Hence, as done before, we seek conditions that ensure $\mathbf{P}(t)=\bRn$ for any $t$, because these conditions will imply \eqref{hyp:qual_dual}.

Firstly, it's worth noting that if $\mathcal{X}_t$ is compact, we can select $w=-p$ for every $p\in \bRn$, resulting in
\begin{equation*}
	\begin{split}
		\sup_{(x,u)\in \Omega_t}&\{ x\cdot(w+A_t^Tw+A_t^Tp) +  u\cdot (B_t^Tp+B_t^Tw) -\ell_t(x,u)\} = \\
		&\sup_{(x,u)\in \Omega_t}\{\langle x,p+2A_t^Tp\rangle  -\ell_t(x,u)\} \in \bR,
	\end{split}
\end{equation*}
by continuity of $\ell_t$. Therefore, \eqref{hyp:qual_dual} holds true in this case. Similarly, \eqref{hyp:qual_dual} also holds if $\mathcal{U}_t$ is compact and $A_t^T+I_n$ has rank $n$ for every $t$.

By continuity of $\ell_t$, we also have that \eqref{hyp:qual_dual} holds if $\Omega_t$ is compact. This is true if, for example, $f_t$ is coercive in $x$. To prove this, for the sake of contradiction let $\{(x_k,u_k)\}\subset \Omega_t$ such that $|(x_k,u_k)|\to \infty$. We have two cases. Either $|x_k|\to \infty$, which contradicts the coercivity of $f_t$ in $x$, or $|u_k|\to \infty$ and $\{x_k\}$ is bounded, which contradicts \eqref{A}.

On the other hand, it is easy to see that if $\ell_t$ is coercive for any $t$, then \eqref{hyp:qual_dual} holds. If the matrix $A_t^T+I_n$ has rank $n$ for all $t$, we can refine this argument by asking
\begin{gather*}\label{B} \tag{B}
	\exists \kappa_0\in \bR,h_t:\bR^m\to \bR \ \textrm{such that} \\
	\quad \sup_{(x,u)\in \Omega_t}\{|x|:\ell_t(x,u)-\langle z,u\rangle\leq \kappa_0\} \leq h_t(z), \quad \forall z\in \bR^m.
\end{gather*}
In this case, $\mathbf{P}(t)=\bRn$ for every $t$ as well. To prove this, let $p\in \bRn$ and $w\in \bRn$ such that $w+A_t^Tw=-A_t^Tp$. For the sake of contradiction, suppose that $\exists \{(x_k,u_k)\}\subset \Omega_t$ such that 
\begin{equation*}
	x_k\cdot (w+A_t^Tw+A_t^Tp) + u_k\cdot (B_t^Tp+B_t^Tw) -\ell_t(x_k,u_k) =  u_k\cdot (B_t^Tp+B_t^Tw) -\ell_t(x_k,u_k) \to \infty.
\end{equation*}
This implies that there exists some natural $N_0$ such that 
$$u_k\cdot (B_t^Tp+B_t^Tw) -\ell_t(x_k,u_k)\geq -\kappa_0,\qquad \forall k\geq N_0.$$ By taking $z=B_t^Tp+B_t^Tw$ in \eqref{B}, then $\sup_{k\geq N_0}|x_k|\leq h_t(z)$. Then, by \eqref{A}, there exists some $M\in \bR$ such that $\sup_{k\geq N_0}|u_k|\leq M$, which is a contradiction with $u_k\cdot (B_t^Tp+B_t^Tw) -\ell_t(x_k,u_k) \to \infty$ as $\ell_t$ is continuous.

The condition \eqref{B} corresponds to the discrete version of \cite[Assumption (A6)]{BecHer22}, and it is satisfied, for example, if $\ell_t$ is coercive in $u$ and has at least linear growth in $x$.

The main results of this subsection are summarized in the following proposition:
\begin{prop}
	Under condition \eqref{A}, the problem \eqref{prob:OC} admits a Bolza problem formulation and, therefore, Theorem \ref{thm:characteristics} holds true. Moreover, the converse remains true if the intersection between the solution set of \eqref{eq_sys} and the set $\ri(\mathcal{X}_1)\times\ri(\mathcal{U}_1)\times\ldots\times\ri(\mathcal{U}_{T-1})\times\ri(\mathcal{X}_T)$ is nonempty, and at least one of its elements satisfies the condition $f_t(x_t,u_t)<0$ for every $t\in[\![1:T]\!]$ and either of the following holds:
\begin{enumerate}[label=\alph*)]
		\item $\mathcal{U}_t$ is compact and the matrix $A_t^T+I_n$ has rank $n$, $\mathcal{X}_t$ is compact or $f_t$ is coercive in $x$ for all $t\in[\![1:T]\!]$.
		\item Alternatively, $\ell_t$ is coercive or the matrix  $A_t^T+I_n$ has rank $n$ and $\ell_t$ satisfies \eqref{B} for all $t\in[\![1:T]\!]$.
	\end{enumerate}
\end{prop}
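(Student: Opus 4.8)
The plan is to proceed in two stages. \emph{First}, one checks that the Lagrangian $L_t$ obtained from \eqref{prob:OC} together with its terminal cost $g$ satisfy the standing assumption \eqref{hyp:convex}; once this is done, \eqref{prob:OC} is literally a fully convex Bolza problem in the sense of Section~\ref{sec:duality} with $\val\eqref{prob:OC}=\vartheta_\tau(\xi)$, so the forward implication of Theorem~\ref{thm:characteristics} follows with no extra work. \emph{Second}, for the converse one only needs the two qualification conditions \eqref{hyp:qual_prim} and \eqref{hyp:qual_dual}, and the bulk of the argument is to translate the hypotheses listed in the statement into these two conditions.

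For the first stage, convexity of $L_{t+1}=G_t+\delta_{\mathcal X_t}$ is routine, since $G_t$ is the infimal projection in $u$ of the jointly convex function $g_t$ and $\mathcal X_t$ is convex. Properness is where \eqref{A} first enters: if $L_{t+1}(x,v)=-\infty$, choose $u_k$ with $(x,u_k)\in\Omega_t$ and $\ell_t(x,u_k)\to-\infty$; by \eqref{A} the sequence $\{u_k\}$ is bounded by $\psi_t(x)$, contradicting continuity of the real-valued convex function $\ell_t$. Lower semicontinuity is the step I expect to be the main obstacle. Here the plan is: (i) show $g_t$ is level-bounded in $u$ locally uniformly in $(x,v)$ by using upper semicontinuity of $\psi_t$ to bound it by some $M$ on a neighbourhood of an arbitrary $(\bar x,\bar v)$, which traps every sublevel set $\{u:g_t(x,v,u)\le\alpha\}$ in a fixed ball of radius $M$; (ii) deduce lower semicontinuity of $G_t$ from the tightness and epi-limit machinery of \cite[Proposition~5.4 and Theorem~5.6]{RoyWetBook22}; and (iii) prove $\dom G_t$ is closed by extracting from any convergent $(x_k,v_k)\in\dom G_t$ a convergent subsequence of the associated controls (bounded through \eqref{A}) and passing to the limit in $f_t(x_k,u_k)\le 0$ and $v_k=A_tx_k+B_tu_k+\varphi_t$ using continuity of $f_t$. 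Together with the fact that $g$ is proper, convex and l.s.c.\ by assumption, this establishes \eqref{hyp:convex}, hence the forward statement.

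For the second stage I would treat the two qualification conditions separately. For \eqref{hyp:qual_prim}, the set computations give $\X(t+1)=\mathcal X_t\cap\{x:\exists u\in\mathcal U_t,\ f_t(x,u)\le0\}$ and $\ri\bigl(\Gamma_L(t+1,x)\bigr)=B_t\bigl(\ri(\mathcal U_t)\cap\ri(\{u:f_t(x,u)\le0\})\bigr)+A_tx+\varphi_t$, so a sequence $\{\bar x_t\}$ realizing \eqref{hyp:qual_prim} is exactly a solution of the linear system \eqref{eq_sys} lying in $\ri(\mathcal X_1)\times\ri(\mathcal U_1)\times\cdots\times\ri(\mathcal U_{T-1})\times\ri(\mathcal X_T)$ whose controls satisfy the strict inequality $f_t(x_t,u_t)<0$ (the Slater condition, which puts $u_t$ in the interior, hence the relative interior, of $\{u:f_t(x_t,u)\le0\}$); this is precisely the hypothesis assumed. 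For \eqref{hyp:qual_dual}, since $\Gamma_K(t+1,p)$ is convex its relative interior is automatically nonempty, so it suffices to show $\P(t)=\bRn$ for every $t$, in which case $\bar p_1=\cdots=\bar p_T=0$ works. In case (a), compactness of $\mathcal U_t$ together with the rank condition on $A_t^T+I_n$ controls the control-linear term in the supremum defining $K_t(p+w,w)$ --- choosing $w$ with $(A_t^T+I_n)w=-A_t^T p$ annihilates the state-linear term --- while compactness of $\mathcal X_t$, or coercivity of $f_t$ in $x$, makes $\Omega_t$ bounded (coercivity excludes $|x_k|\to\infty$, and \eqref{A} excludes $|u_k|\to\infty$ with $\{x_k\}$ bounded), so that supremum is finite for every $w$. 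In case (b), coercivity of $\ell_t$ makes the supremum finite directly; otherwise, using the rank condition one again picks $w$ with $w+A_t^T w=-A_t^T p$ to kill the state-linear term, and then \eqref{B} bounds any maximizing sequence in $x$ while \eqref{A} bounds it in $u$, forcing the supremum to be finite. In every case $\P(t)=\bRn$, so \eqref{hyp:qual_dual} holds and the converse of Theorem~\ref{thm:characteristics} applies.
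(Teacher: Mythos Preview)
Your proposal is correct and follows essentially the same approach as the paper: the proposition is a summary of the analysis carried out in the body of Section~4.2, and you reproduce that analysis faithfully---convexity/properness/l.s.c.\ of $L_{t+1}$ via \eqref{A} and the level-boundedness machinery of \cite{RoyWetBook22}, the computation of $\Gamma_L$ and $\X(t)$ leading to \eqref{hyp:qual_prim} through the Slater-type condition on a solution of \eqref{eq_sys}, and the case analysis giving $\P(t)=\bR^n$ for \eqref{hyp:qual_dual}. One small caution: the claim that ``$\bar p_1=\cdots=\bar p_T=0$ works'' once $\P(t)=\bR^n$ needs $0\in\ri(\Gamma_K(t,0))$ and $0\in\ri(\dom f)$, which is not automatic; the paper is equally informal on this point, simply asserting that $\P(t)=\bR^n$ implies \eqref{hyp:qual_dual}, so you are not diverging from it, but a fully rigorous version would build the sequence $\{\bar p_t\}$ forward and address the terminal condition separately.
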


\section{Conclusion}
In this paper we have shown that the evolution over time of the subgradients of the value function of a (deterministic) discrete-time convex Bolza problem can be associated with trajectories of a discrete-time Hamiltonian system. This is the so-called \emph{discrete-time characteristic method}. We have also demonstrated that the qualificiation conditions required for our results to hold are satisfied in several cases of interest; e.g. the Linear-Quadratic problem and the Linear-Convex problem with mixed constraints.

This paper paves the way for future extension to stochastic convex Bolza problems. Indeed, our main result (Theorem \ref{thm:characteristics}) as well as our intermedediate results (Proposition \ref{prop:conjugacy_1} and \ref{prop:conjugacy_2}) are consequences of the research reported by Rockafellar and Wets in \cite{RocWet83}. For our paper, we have only used the deterministic part, but we plan to use the stochastic part in a forthcoming paper.

\bibliographystyle{abbrv}
\bibliography{biblio}

\end{document}